\title{Hilbert Schemes and Toric Degenerations for Low Degree Fano Threefolds}
\author{Jan Christophersen}
\address{Matematisk institutt, Postboks 1053 Blindern, University of Oslo, N-0216 Oslo, Norway}
\email{christoph@math.uio.no}
\author{Nathan Ilten}
\address{Department of Mathematics, University of California, Berkeley CA 94720}
\email{nilten@math.berkeley.edu}
\newcommand{\bp}{\mathbf{bp}}
\newcommand{\CC}{\mathbb{C}}
\newcommand{\Aff}{\mathbb{A}}
\newcommand{\ZZ}{\mathbb{Z}}
\newcommand{\QQ}{\mathbb{Q}}
\newcommand{\PP}{\mathbb{P}}
\newcommand{\K}{\mathcal{K}}
\newcommand{\mcL}{\mathcal{L}}
\newcommand{\T}{\mathcal{T}}
\newcommand{\tftw}{\mathfrak{tor}_{12}}
\newcommand{\tftv}{\mathfrak{tor}_{10}}
\newcommand{\tft}{\mathfrak{tor}}
\newcommand{\hilb}{\mathcal{H}}
\newcommand{\X}{\mathcal{X}}
\newcommand{\N}{\mathcal{N}}
\newcommand{\CO}{\mathcal{O}}
\newcommand{\I}{\mathcal{I}}
\newcommand{\numex}[2]{ {

  \vspace{.2cm}

\bf \noindent  #1}. 
#2

\vspace{.2cm}

}
\DeclareMathOperator{\Spec}{Spec}
\DeclareMathOperator{\Proj}{Proj}
\DeclareMathOperator{\Def}{Def}
\DeclareMathOperator{\Hom}{Hom}
\DeclareMathOperator{\TC}{TC}
\newtheorem{lemma}{Lemma}[section]
\newtheorem{prop}[lemma]{Proposition}
\newtheorem{thm}[lemma]{Theorem}
\newtheorem*{prob}{Problem}
\theoremstyle{definition}
\newtheorem*{ex}{Example}
\newtheorem*{remark}{Remark}
\newcommand{\tritwelve}{
\psset{unit=.45cm}
\begin{pspicture}(-5,-5)(5,5)
\psline(-4,-4)(4,4)
\psline(-4,0)(4,0)
\psline(0,4)(0,-4)
\psline(-2,-2)(0,-2)(2,0)(2,2)(0,2)(-2,0)(-2,-2)
\psdots(0,0)(-2,-2)(0,-2)(-2,0)(2,0)(0,2)(2,2)
\psdots[dotstyle=o](-4,-4)(0,-4)(-4,0)(4,0)(0,4)(4,4)
\end{pspicture}
}
\newcommand{\cubeone}{
\psset{unit=.4cm}
\begin{pspicture}(-2,-2)(18,16)
\psdots(0.00,0.00)(10.00,0.00)(0.00,10.00)(10.00,10.00)(6.00,4.00)(16.00,4.00)(6.00,14.00)(16.00,14.00)

\psline(0.00,0.00)(10.00,0.00)(10.00,10.00)(0,10)(0,0)
\psline(10,0)(16.00,4.00)(16.00,14.00)(10,10)
\psline(0,10)(6,14)(16,14)
\psline[linestyle=dashed](0,0)(6,4)(16,4)
\psline[linestyle=dashed](6,4)(6,14)
\rput(-1.2,0){$x_{000}$}
\rput(11.5,0){$x_{100}$}
\rput(-1.2,10){$x_{001}$}
\rput(7.2,5){$x_{010}$}
\rput(17.2,5){$x_{110}$}
\rput(17,15){$x_{111}$}
\rput(6,15){$x_{011}$}
\rput(9,11){$x_{101}$}
\end{pspicture}
}
\newcommand{\cubetwo}{
\psset{unit=.4cm}
\begin{pspicture}(-2,-2)(18,16)
\psdots(0.00,0.00)(10.00,0.00)(0.00,10.00)(10.00,10.00)(6.00,4.00)(16.00,4.00)(6.00,14.00)(16.00,14.00)
\psline(0,0)(10,10)
\psline(0,10)(16,14)
\psline(0,0)(16,4)(10,10)
\psline(0,10)(4.8,5.2)
\psline(11.2,9.2)(16,14)
\psline[linestyle=dashed](5.2,4.8)(6,4)(10.8,8.8)

\psset{linecolor=lightgray}
\psline(0.00,0.00)(10.00,0.00)(10.00,10.00)(0,10)(0,0)
\psline(10,0)(16.00,4.00)(16.00,14.00)(10,10)
\psline(0,10)(6,14)(16,14)
\psline[linestyle=dashed](0,0)(6,4)(16,4)
\psline[linestyle=dashed](6,4)(6,14)
\end{pspicture}
}
\newcommand{\torictentable}{
\begin{tabular}{|@{$\quad$}p{5cm}@{$\quad$}|@{$\quad$}  p{9cm}@{$\quad$}|}
\hline
& \# in the Graded Ring Database \cite{grdb}\\
\hline
&\\
Points on $B_{84}$&
437961
\\
&\\
\hline
&\\
Points on $B_{84}\cap B_{85}$ &
86711, 98325, 433633, 439399 
\\
&\\
\hline
&\\
Points on $B_{85}\cap B_{88}^\dagger$ &
522075, 523456, 547399
\\
&\\
\hline
&\\
Points on $B_{84}\cap B_{85}\cap B_{84}^\dagger$ &
275510, 283519, 521212, 522702 
\\
&\\
\hline
\multicolumn{2}{|c|}{}\\
\multicolumn{2}{|c|}{Points not listed are smooth points on $B_{85}$.}\\
\multicolumn{2}{|c|}{}\\
\hline

\end{tabular}
}
\newcommand{\torictwelvetable}{
\begin{tabular}{|@{$\quad$}p{5cm}@{$\quad$}|@{$\quad$}  p{9cm}@{$\quad$}|}
\hline
& \# in the Graded Ring Database \cite{grdb}\\
\hline
&\\
Points on $B_{96}$&
146786, 444999, 544855
\\
&\\
\hline
&\\
Points on  $B_{97}$&
525553, 545072
\\
&\\
\hline
&\\
Points on $B_{99}$&
317924,  525745, 545139
\\
&\\
\hline
&\\
Points on $B_{96}\cap B_{98}$&
147467\fooast, 446913\fooast, 544886\fooast
\\
&\\
\hline
&\\
Points on $B_{97}\cap B_{98}$&
95245, 281906, 292940, 439663, 521504
\\
&\\
\hline
&\\
Points on $B_{98}\cap B_{99}$&
263867, 274128, 281846, 439636, 446933, 446982, 522703, 544529, 544753, 573895, 585686
\\
&\\
\hline
&\\
Point on $B_{96}\cap B_{97}\cap B_{98}$&
544887\fooast
\\
&\\
\hline
&\\
Points on $B_{97}\cap B_{98}\cap B_{99}$&
5953, 5954, 29624, 72202, 72684, 87167, 147470, 261714, 264855, 269333, 275527, 283523, 294031, 294043, 306960, 321877, 321879\fooast, 355616, 431397, 431910, 432464, 432661, 434956, 435216, 442762, 452175, 466014, 520330, 520423, 520890, 520931, 522058, 522085, 522683, 524375\fooast, 524380, 524396, 526891, 528560, 530438, 544400, 544417, 544419, 544458, 544530, 544536, 544539, 544697, 544885, 545307, 547388, 547402, 547423, 547460, 566695, 566716, 585890, 585895, 585897, 610803
\\
&\\
\hline
&\\
Point on $B_{97}\cap B_{98}\cap B_{99}\cap B_{99}^\dagger$&
547426\fooast
\\
&\\
\hline
\multicolumn{2}{|c|}{}\\
\multicolumn{2}{|c|}{Points not listed are smooth points on $B_{98}$.}\\
\multicolumn{2}{|c|}{}\\
\multicolumn{2}{|c|}{Points marked with $*$ also lie on other}\\
\multicolumn{2}{|c|}{non-smoothing components (possibly embedded).}\\
\multicolumn{2}{|c|}{}\\
\hline

\end{tabular}
}
\newcommand{\fooast}{$^*$}
\begin{document}
\maketitle
\begin{abstract}
For fixed degree $d\leq 12$, we study the Hilbert scheme of
degree $d$ smooth Fano threefolds in their anticanonical embeddings.
We use this to classify all possible degenerations of these varieties to toric Fano varieties with at most Gorenstein singularities.
\end{abstract}

\noindent Keywords: Fano threefolds, mirror symmetry, toric geometry,
Stanley-Reisner rings, Hilbert schemes \\

\noindent MSC: 14J45, 13F55, 14D15, 14M25
\section{Introduction}
Let $X\subset \PP^n$ be a scheme over $\CC$ and $\hilb_X$ the Hilbert scheme parametrizing subschemes of $\PP^n$ with the same Hilbert polynomial as $X$. Consider the following:
\begin{prob}
  Determine which irreducible components of $\hilb_X$ contain the point $[X]$ corresponding to $X$.
\end{prob}
\noindent The main result of the present paper is to solve this problem for the set of all \emph{Gorenstein toric Fano threefolds} of degree at most twelve, with respect to their anticanonical embeddings.
Our primary motivation for studying these Hilbert schemes comes from mirror symmetry and a conjectural relationship between toric degenerations and certain Laurent polynomials, see \S\ref{ss:mirror}.

We first fix notation and describe the relevant irreducible components of our Hilbert schemes in \S\ref{ss:irred}. In \S\ref{ss:main}, we present our main result, as well as our general strategy of proof.
In what follows, we will always be working over $\CC$.
A scheme is \emph{Fano} if it is projective, and the dualizing sheaf has a tensor multiple which is locally free and anti-ample.

\subsection{Irreducible Components}\label{ss:irred}
The majority of irreducible components we are interested in come from smooth Fano threefolds as follows. If $V$ is a  smooth Fano threefold with very ample anticanonical divisor, then the Hilbert polynomial of $V$ in its anticanonical embedding is determined solely by its degree $d=(-K_V)^3$. We denote the Hilbert scheme parametrizing subvarieties of $\PP(|-K_V|)$ with this Hilbert polynomial by $\hilb_d$.
The variety $V$ together with the anticanonical embedding corresponds to a point $[V]\in \hilb_d$, and this point lies on a single irreducible component, cf.~\cite{mm:cla}.

Deformation families of smooth Fano threefolds have been completely classified, see \cite{iskovskih:78a} and \cite{mori:81a}. Each family is distinguished by the degree $d$, the second and third Betti numbers $b_2$ and $b_3$, and the Lefschetz discriminant of any threefold in the family. For threefolds of degree less than $30$, the first three invariants suffice.
The degree must always be even, and can range from $2$ to $64$. 
We shall refer to the families and general elements of the families
interchangeably. 

Using this classification, we do some calculations to determine exactly which Fano threefolds of degree $d\leq 12$ have very ample anticanonical divisor and thus give rise to a component of $\hilb_d$, see \S\ref{sec:veryample}. The results are recorded in Table \ref{table:fanos}. Note that all varieties appearing in the table have index one, except for $V_8'$, which has index $2$. If $-K_V$ is very ample, we record how many global sections the corresponding normal sheaf has, which we calculate using our Proposition \ref{prop:h0N}. This is just the dimension of the corresponding component of $\hilb_d$, to which we also give a name in the table.

In addition to the components of Hilbert schemes corresponding to smooth Fano threefolds, we will encounter three non-smoothing components shown in Table \ref{table:gorfanos}. General elements of these components are certain Gorenstein trigonal Fano threefolds,  classified in \cite{cheltsov:05a}. We give a more precise description of these components in \S\ref{sec:exotic}. 

\begin{table}
\begin{center}
\begin{tabular}{|c| c| c| c| c| c|c| }
\hline
Name & Degree & $b_2$ & $b_3/2$ &  $-K_V$ very ample? & $h^0(\N)$ & Component of $\hilb_d$\\
\hline
$V_2$ & $2$ & $1$ & $52$  & No & N/A & N/A\\
$V_4$ & $4$ & $1$ & $30$  & Yes & 69 & $B_{69}^4$\\
$V_4'$ & $4$ & $2$ & $22$  & No & N/A& N/A\\
$V_6$ & $6$ & $1$ & $20$  & Yes & 69 & $B_{69}^6$\\
$V_6'$ & $6$ & $2$ & $20$  & No & N/A& N/A\\
$\PP^1\times S_1$ & $6$ &10 &0 & No & N/A&N/A\\
$V_8$ & $8$ & $1$ & $14$  & Yes & 75 & $B_{75}$\\
$V_8'$ & $8$ & $1$ & $21$  & No & N/A & N/A\\
$V_8''$ & $8$ & $2$ & $11$  & No & N/A & N/A\\
$V_{10}$ & $10$ & $1$ & $10$  & Yes & 85 & $B_{85}$\\
$V_{10}'$ & $10$ & $2$ & $10$  & Yes & 84 & $B_{84}$\\
$V_{12}$ & $12$ & $1$ & $7$  & Yes & 98 & $B_{98}$\\
$V_{12,2,6}$ & $12$ & $2$ & $6$  & Yes & 96 & $B_{96}$\\
$V_{12,2,9}$ & $12$ & $2$ & $9$  & Yes & 99 & $B_{99}$\\
$V_{12,3}$ & $12$ & $3$ & $8$  & Yes & 97 & $B_{97}$\\
$\PP^1\times S_2$ & $12$ &9 &0 & No & N/A&N/A\\
\hline
\end{tabular}
\end{center}\caption{Smooth Fano Threefolds of Low Degree}\label{table:fanos}
\end{table}

\begin{table}
\begin{center}
\begin{tabular}{|c| c| c| c| }
\hline
Name & Degree &  $h^0(\N)$ & Component of $\hilb_d$\\
\hline
$T_3$ & $10$ & $88$ & $B_{88}^\dagger$\\
$T_9$ & $10$ & $84$ &$B_{84}^\dagger$\\
$T_{25}$ & $12$& $99$ &$B_{99}^\dagger$\\
\hline
\end{tabular}
\end{center}\caption{Select Trigonal Fano Threefolds}\label{table:gorfanos}
\end{table}

\subsection{Main Result}\label{ss:main}
Let $\tft_d$ be the set of all degree $d$ toric Fano threefolds with at most Gorenstein singularities.
This set can be understood explicitly. Indeed, Gorenstein toric Fano varieties correspond to reflexive polytopes, see e.g. \cite[Theorem 8.3.4]{cox:11a}. The set of all three-dimensional reflexive polytopes has been classified by Kreuzer and Skarke in \cite{kreuzer}; the corresponding data may be found in the Graded Ring Database \cite{grdb}. 
As our main result, we determine exactly which components of $\hilb_d$ contain the points corresponding to elements of $\tft_d$.
This can be summarized as follows:

\begin{thm}\label{mainthm:0}
	For $d=4,6,8$ and any $X\in\tft_d$, $[X]$ is a smooth point of $\hilb_d$ on the same irreducible component  as $[V_d]$.
	For $d=10$ or $d=12$ and any $X\in\tft_d$, $[X]$ is a point on exactly the irreducible components of $\hilb_d$ recorded in Tables \ref{table:toricten} and \ref{table:torictwelve}, where we refer to $X$ by its number in the Graded Ring Database \cite{grdb}.
\end{thm}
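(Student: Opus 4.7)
The plan is to proceed by case analysis, exploiting the finiteness of $\tft_d$ from Kreuzer--Skarke's classification. For each $d\in\{4,6,8,10,12\}$ and each $X\in\tft_d$ (a finite, explicit list, though with several hundred entries when $d=12$), I would determine the set of irreducible components of $\hilb_d$ through $[X]$ by combining an upper bound from a tangent space calculation with a lower bound from explicit degenerations. Throughout, the references to components $B_j$, $B_j^\dagger$ listed in Tables \ref{table:fanos} and \ref{table:gorfanos} and their dimensions $h^0(\N)$ serve as the target ``dictionary'' against which the local structure at each $[X]$ is matched.

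For the upper bound, I would compute $h^0(\N_{X/\PP})$ at each toric $X$ using the fact that $X\subset\PP(|-K_X|)$ is a projectively normal toric embedding: this reduces, via Altmann's description of $T^1$ for affine Gorenstein toric singularities together with the global-to-local spectral sequence, to a combinatorial computation on the polytope of $X$, which is routine to implement in Macaulay2. This $h^0(\N)$ equals $\dim T_{[X]}\hilb_d$ and hence bounds the sum of local dimensions of components through $[X]$. For $d=4,6,8$ the table predicts a single component $B_d^d$ (or $B_{75}$), so matching $h^0(\N_X)$ to this single dimension immediately yields both smoothness of $\hilb_d$ at $[X]$ and membership in the unique component of $[V_d]$; one then has to do essentially nothing more once the Minkowski/smoothing step below is completed.

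For the lower bound, I would exhibit, for each component $B$ claimed in the tables to contain $[X]$, an explicit one-parameter flat family with special fiber $X$ and general fiber on $B$. The natural tool is Altmann's Minkowski-decomposition construction of toric deformations (used extensively by the authors in earlier papers): a decomposition of the moment polytope of $X$ into a Minkowski sum gives a deformation over $\Aff^1$, and one verifies that the general fiber is either a smooth Fano of the relevant type (giving membership in $B_j$) or one of the trigonal degenerations $T_3,T_9,T_{25}$ of Section \ref{sec:exotic} (giving membership in a dagger component). For $d=10,12$ this step has to be carried out systematically for each entry of Tables \ref{table:toricten} and \ref{table:torictwelve}.

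The hardest step is completeness of the component lists for $d=12$, and in particular the ``embedded'' points marked $*$ in Table \ref{table:torictwelve}. There, $h^0(\N_X)$ may exceed the sum of the dimensions of the obvious reduced components through $[X]$, so one cannot conclude by a naive dimension count that no further component is missed. I would handle this by producing explicit additional (non-smoothing, possibly embedded) components via deformations to Gorenstein trigonal threefolds or other non-smoothable toric Fanos, checking that their dimensions plus those of the already-identified components match $h^0(\N_X)$; and by using obstruction computations (second-order deformations in the tangent cone at $[X]$) to rule out any further hidden components. The challenge is essentially bookkeeping — many toric varieties, several components, and subtle embedded structure at a handful of points — rather than any single conceptually difficult step, so the proof is intrinsically computer-assisted.
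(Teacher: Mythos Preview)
Your proposal differs substantially from the paper's argument, and in ways that leave genuine gaps.

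First, the upper-bound step is mis-stated: $h^0(\N_{X/\PP})=\dim T_{[X]}\hilb_d$ bounds the dimension of \emph{each} component through $[X]$, not the \emph{sum} of their dimensions. So even a perfect computation of $h^0(\N_X)$ does not, by itself, limit how many components pass through $[X]$ or which ones they are. For $d=4,6,8$ you also assume implicitly that the component of $[V_d]$ is the only component of $\hilb_d$ near $[X]$; nothing in the tables guarantees that.

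Second, and more seriously, your lower-bound mechanism (Minkowski decompositions producing one-parameter families from $X$ to a target smooth Fano or trigonal threefold) is not what the paper does, and it is unclear it can be made to work. Minkowski summands of three-dimensional reflexive polytopes do not in any systematic way produce deformations whose general fibre is, say, $V_{12}$ or $V_{12,2,9}$; and even when they exist, identifying the general fibre with a specific smooth Fano is nontrivial. The paper goes in the \emph{opposite} direction: it degenerates $X$ further, via regular unimodular triangulations of its moment polytope, to a handful of Stanley--Reisner schemes $\PP(T_i*\Delta_0)$ and $\PP(T_8'*\Delta_0)$. The point is that these are common degeneration targets both for the toric $X$ and for the smooth Fanos $V_d$, and the local structure of $\hilb_d$ at these Stanley--Reisner points is either smooth (Theorem~\ref{thm:nice}) or explicitly computed (Theorems~\ref{thm:tc} and~\ref{thm:uf}). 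This simultaneously gives membership \emph{and} exclusion: if $X$ degenerates to $\PP(T_8*\Delta_0)$, a smooth point on $B_{98}$, then $[X]$ lies on $B_{98}$ and on no other component; if $X$ degenerates to $\PP(T_8'*\Delta_0)$, then $[X]$ can lie only on $B_{97}$, $B_{98}$, $B_{99}$, and one reads off which from the explicit universal family. Your proposal has no analogue of this exclusion step.

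Third, for the residual cases (those $X$ with no triangulation to the nice Stanley--Reisner complexes), the paper applies the tangent-cone strategy of Section~\ref{sec:def2}: compute low-order obstruction equations, decompose, deform onto each piece, and match to a known component by further degeneration or by the rolling-factors format. Your ``obstruction computations to rule out hidden components'' points in this direction, but the paper's actual argument for the starred entries in Table~\ref{table:torictwelve} is more delicate: it uses scroll-theoretic arguments (e.g.\ that certain scroll deformations are incompatible with rolling a cubic three times) to exclude $B_{96}$, and a dimension argument on the locus $S$ of divisors on a $(2,2,1,0)$ scroll to show that an apparent $97$-dimensional piece is necessarily embedded. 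Matching dimensions against $h^0(\N_X)$, as you suggest, would not suffice here.
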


\begin{table}
	\begin{center} {\small\torictentable}
\end{center}
	\caption{Degree ten toric Fano varieties and the scheme
	$\hilb_{10}$.}\label{table:toricten}
\end{table}

\begin{table}
	\begin{center} {\small\torictwelvetable}
\end{center}
	\caption{Degree twelve toric Fano varieties and the scheme
	$\hilb_{12}$.}\label{table:torictwelve}
\end{table}

We now outline the proof strategy:
\begin{enumerate}
  \item\label{item:one} For each $d\leq 12$, we identify a Stanley-Reisner scheme $S$ such that    
    $[S]\in\hilb_d$ is a smooth point lying on the component corresponding to $V_d$, cf.~Theorem \ref{thm:nice}.
  \item If $X$ is any Gorenstein toric Fano threefold of degree $4$, $6$, or $8$, then its moment polytope admits a certain ``good'' unimodular triangulation, cf.~Proposition \ref{prop:remain0}. This implies that $X$ has an embedded degeneration to the Stanley-Reisner scheme $S$ of \ref{item:one}. Hence, $[X]$ is a smooth point on the component corresponding to $V_d$. For many 
Gorenstein toric Fano threefold of degrees $10$ and $12$, a similar argument applies. 
  \item For $X$ one of the remaining degree $10$ and $12$ threefolds, we follow a general strategy described in \S\ref{sec:def2}. Using computer-assisted methods, we determine the number and dimensions of the tangent cone of $\hilb_d$ at $[X]$. These components are then matched with irreducible components of $\hilb_d$. There are three key ingredients:
    \begin{itemize}
      \item Comparison theorems allowing for effective computation of tangent and obstruction spaces of the Hilbert functor, see \S\ref{sec:def1};
      \item An understanding of the deformation theory of subvarieties of rational normal scrolls, see \S\ref{sec:rolling};
      \item A description of the universal family of $\hilb_{12}$ near a special point lying in the intersection of $B_{97}$, $B_{98}$, and $B_{99}$, see \S\ref{sec:bipyramid}.
    \end{itemize}
    The degree $10$ and $12$ cases are dealt with respectively in Theorems \ref{thm:re10} and \ref{thm:re12}.
\end{enumerate}

To the best of our knowledge, our strategy of studying the local structure of Hilbert schemes is new. We believe that these techniques will yield success in studying Hilbert schemes for Gorenstein toric Fano threefolds of higher degrees as well, although the increase in embedding dimension will lead to increased computational difficulties. 
We use a number of computer programs to carry out our calculations: {\small\verb+Macaulay2+} \cite{M2}, {\small\verb+Versal Deformations+} \cite{VD}, {\small\verb+TOPCOM+} \cite{TOPCOM}, and {\small\verb+4ti2+} \cite{4ti2}.
Supplementary material containing many of the computer calculations is available online \cite{supp}.

\subsection{Toric Degenerations and Connections to Mirror Symmetry}\label{ss:mirror}
A consequence of our main result is a classification of embedded degenerations of smooth Fano threefolds to Gorenstein toric Fano threefolds of degree at most twelve. Indeed, let $V$ be a general smooth Fano threefold of degree $d\leq 12$ with very ample anticanonical divisor, and $X\in \tft_d$. Then $V$ has an embedded degeneration to $X$ if and only if $[X]$ lies on the same irreducible component of $\hilb_d$ as $[V]$.

Toric degenerations are connected to mirror symmetry through the ansatz of \emph{extremal Laurent polynomials}, see \cite{przyjalkowski:09a}.
The quantum cohomology of a smooth Fano variety $V$ of dimension $n$ is conjecturally related to the Picard-Fuchs operator of a pencil $f\colon Y\to \CC$ called a (weak) Landau--Ginzburg model for $V$. The extremal Laurent polynomial ansatz conjectures that one should be able to take $Y=(\CC^*)^n$, that is, $f$ is a Laurent polynomial.
Furthermore, denoting the Newton polytope of $f$ by $\Delta_f$, it is expected that if $f$ gives a Landau--Ginzburg model for $V$, then $V$ degenerates to the toric variety whose moment polytope is dual to $\Delta_f^*$. Conversely, for any Fano toric variety $X$ with mild singularities smoothing to $V$, one expects to be able to find a Landau--Ginzburg model for $V$ in the form of a Laurent polynomial $f$ with $\Delta_f$ dual to the moment polytope of $X$.
T.~Coates, A.~Corti, S.~Galkin, V.~Golyshev,  and A.~Kasprzyk outline a program using these ideas to classify smooth Fano varieties in \cite{coates:fano}.

In \cite{przyjalkowski:09a}, V.~Przyjalkowski showed that for every smooth Fano threefold $V$ of Picard rank one, there is in fact a Laurent polynomial giving a weak Landau--Ginzburg model for $V$. Furthermore, in \cite{ilten:11b}, V.~Przyjalkowski, J.~Lewis, and the second author of the present paper showed that these Laurent polynomials are related to toric degenerations in the above sense. 
T.~Coates, A.~Corti, S.~Galkin, and A.~Kasprzyk have extended Przyjalkowski's result to Fano threefolds of higher Picard rank in \cite{coates:quantum}. Matching up our toric degenerations with the extremal Laurent polynomials they have found would provide evidence that the existence of an extremal Laurent polynomial is coupled to the existence of a toric degeneration.

Motivated by similar considerations, S.~Galkin classified all degenerations of smooth Fano threefolds to Fano toric varieties with at most \emph{terminal} Gorenstein singularities in \cite{galkin:07a}. This situation is however significantly different than the present one. Indeed, any Fano threefold with at most terminal Gorenstein singularities has a unique smoothing. This is no longer true if we relax the condition that the singularities be terminal; smoothings need not exist, and if they do, need not be unique.

\subsection*{Acknowledgements} 
We thank Cinzia Casagrande, Sergei Galkin, Jan Stevens, and the anonymous referee for helpful comments.

\section{Deformation Theory Methods}\label{sec:def}
In this section, we present some methods from deformation theory which will be necessary for our arguments. We first recall comparison theorems which will allow us to more easily compute the tangent and obstruction spaces of the Hilbert functor. Secondly, since many of the varieties we study occur as subvarieties of rational normal scrolls, we discuss the deformation theory of such varieties in terms of rolling factors. Thirdly, we outline a general strategy for identifying the component structure of Hilbert schemes at a given point; we use this strategy in \S\ref{sec:degten} and \S\ref{sec:toric}.

\subsection{Comparison Theorems and Forgetful Maps}\label{sec:def1}
Let $S = k[x_0,\dots, x_n]$, $A = S/I$ be a graded ring and $X = \Proj
A \subseteq \PP^n$. We may consider two deformation functors for $X$: the deformation
functor $\Def_X$ of isomorphism classes of deformations of $X$ as a
scheme, and the
local Hilbert functor $H_X$ of embedded deformations of $X$ in
$\PP^n$, see \cite{sernesi:06a} for details. The former has a formal semiuniversal element and the latter
a formal universal element. There is a natural forgetful map $H_X \to
\Def_X$. Let $T^1_X$ and $T^2_X$ be the tangent space and obstruction space for
$\Def_X$ and $T^i_{X/\PP^n}$, $i=1,2$,  the same for $H_X$. 

Assume now
that \emph{$A$ is Cohen-Macaulay of Krull dimension $4$} which is the case for
all schemes in this paper. We may use the comparison theorems of
Kleppe to relate the $T^i_{X/\PP^n}$ and $T^i_{X}$  to the degree 0 part of cotangent
modules of the algebra $A$. This has a large computational
benefit. For $H_X$, \cite[Theorem 3.6]{kleppe:79a} applied to our
situation yields 
\begin{gather*}
 {\Hom_A(I/I^2, A)}_0 =  {(T^1_{A/S})}_0  \simeq  T^1_{X/\PP^n} = H^0(X,\N_{X/\PP^n}) \\
 {(T^2_{A})}_0 \simeq  {(T^2_{A/S})}_0  \simeq T^2_{X/\PP^n} \, .
\end{gather*}
For $\Def_X$, \cite[Theorem 3.9]{kleppe:79a} applied to our situation yields
\begin{gather*}
 {(T^1_{A})}_0  \simeq T^1_{X} \\
0 \to  {(T^2_{A})}_0 \to  T^2_{X} \to  H^3(X, \CO_X)
\end{gather*}
where the latter sequence is exact. The cohomology $ H^3(X, \CO_X)$
does not appear in the statement in \cite{kleppe:79a}, but a careful
reading of the proof shows the existence of the sequence.

Thus if  $H^3(X, \CO_X) = 0$, as will be the case for the Fano
schemes in this paper, also $ {(T^2_{A})}_0 \simeq T^2_{X}$. The
Zariski-Jacobi sequence for $k \to S \to A$ reads 
$$\dots  \to T^1_{A/S} \to T^1_A \to T^1_S(A)  \to T^2_{A/S} \to
T^2_A \to T^2_S(A) \to \cdots$$
and since $S$ is regular $T^i_S(A) = 0$ for $i \ge 1$. This gives the
above written isomorphisms of obstruction spaces  $T^2_{A/S} \simeq T^2_A$ but also a
surjection   $T^1_{A/S} \to T^1_A$. By the above this means the
forgetful map $H_X \to \Def_X$ is surjective on tangent spaces and
injective on obstruction spaces, so it is smooth. 

The outcome of all this is that we may do versal deformation and local
Hilbert scheme computations using the vector spaces $(T^1_{A})_0$,
$(T^1_{A/S})_0$ and $(T^2_{A})_0$. Moreover, by smoothness of the
forgetful map, the equations
for the Hilbert scheme locally at $X$, in particular the component structure,
will be obstruction equations  for $\Def_X$ which involves much fewer parameters.

\subsection{Rolling Factors and Deformations}\label{sec:rolling}
When studying deformations of a scheme $X$, it is often useful to have a systematic way for writing down the equations in $X$. For subvarieties of rational normal scrolls, this is found in the method of \emph{rolling factors} introduced by Duncan Dicks, see e.g. \cite{dicks:88} and \cite[\S 1]{stevens:01a}. Since many of the Fano varieties we consider are subvarieties of scrolls, we summarize this method in the following.

Let $d_0\geq d_1\geq \ldots\geq d_k$ be non-negative integers and $d=\sum d_i$. Let $S$ be the image of 
$$\widetilde{S}=\PP\left(\bigoplus \CO_{\PP^1}(d_i)\right)$$
under the map defined by the twisting bundle $\CO(1)$. Then $\widetilde{S}$ is a $\PP^k$ bundle over $\PP^1$, and $S\subset \PP^{d+k}$ is cut out by the $2\times 2$ minors of
$$
M=\left (\begin{array}{c c c c c c}
	x_0^{(0)}&x_1^{(0)}&\cdots&x_{d_0-1}^{(0)}&x_0^{(1)}\cdots&x_{d_l-1}^{(l)}\\
	x_1^{(0)}&x_2^{(0)}&\cdots&x_{d_0}^{(0)}&x_1^{(1)}\cdots&x_{d_l}^{(l)}\\
\end{array}\right )	
$$
where $l$ is the largest integer such that $d_j\neq 0$.
We call $S$ a scroll of type $(d_0,d_1,\ldots,d_k)$. Note that $\widetilde{S}=S$ if and only if $d_k\neq 0$.

Let $f_0$ be a homogeneous polynomial in the variables $x_j^{(i)}$, $0\leq i \leq k$, $0 \leq j \leq d_i$ and suppose that every monomial in $f_0$ contains a factor from the top row of $M$. Then for every term in $f_0$, we may replace some $x_j^{(i)}$ by $x_{j+1}^{(i)}$ to obtain a new polynomial $f_1$. This process is called \emph{rolling factors}. Different choices of the factors might lead to different polynomials $f_1$, but any difference is contained in the ideal generated by the $2\times 2$ minors of $M$.

\begin{ex} Let $S$ be a scroll of type $(2,2,0,0)$ with corresponding matrix
\begin{align*}
	M=\left(\begin{array}{c c c c}
	x_0&x_1&y_0&y_1\\
	x_1&x_2&y_1&y_2
	\end{array}\right)
\end{align*}
in variables $x_0,x_1,x_2,y_0,y_1,y_2,z_1,z_2$.
Set  $f_0=x_0^2x_2-y_0z_1z_2$. By rolling factors we get the polynomial
$f_1=x_0x_1x_2-y_1z_1z_2$, whose factors we can again roll to get
$f_2=x_0x_2^2-y_2z_1z_2$.
\end{ex}

Let $f_0$ be a homogeneous polynomial as above of degree $e$, and suppose that we can subsequently roll factors $m$ times to get polynomials $f_0,\ldots,f_m$. Then the subvariety $X$ of $S$ cut out by the polynomials $f_0,\ldots,f_m$ is a divisor of type $eD-mF$, where $D$ is the hyperplane class  and $F$ is the image of the fiber class of $\widetilde{S}$. Furthermore, any such subvariety may be described in this matter.

Using this format for writing the equations of $X$, many of its deformations may be readily described, see \cite{stevens:01a}. Arbitrary perturbations of $f_0$ which still may be rolled $m$ times describe deformations of $X$ within its divisor class on $S$. Such deformations are called \emph{pure rolling factor} deformations. Perturbations of the entries of $M$ together with perturbations of the $f_i$ may give deformations of $X$ sitting on a deformed scroll. These deformations are called \emph{scrollar}. In general, there are also \emph{non-scrollar} deformations of $X$ which may not be described in either manner. For an illustration of all three types of deformations, see the example in \S\ref{sec:degten}.  

\subsection{Tangent Cones of Hilbert Schemes}\label{sec:def2}
Let $X$ be a subscheme of $\PP^n$. We would like to identify which components of the corresponding Hilbert scheme $\hilb$ the point $[X]$ lies on. In the following, we outline a general strategy for doing this.
	\begin{enumerate}
		\item Use obstruction calculus and the package {\small\verb+Versal Deformations+} to find the lowest order terms of obstruction equations for $X$. This will be feasible in the cases of interest to us due to the comparison theorems mentioned in \S\ref{sec:def1}. Let $Z$ denote the subscheme of the affine space $\Spec S^{\bullet} H^0(X,\N_{X/\PP^n})$ cut out by these equations; the tangent cone of $\hilb$ at $[X]$ is contained in $Z$.
		\item Do a primary decomposition of these lowest order terms to find the irreducible decomposition $Z_1,\ldots,Z_k$ of $Z$. Any component of the tangent cone $\TC_{[X]}\hilb$ is contained in some $Z_i$. Let $d_i$ denote the dimension of $Z_i$.
		\item For each $Z_i\subset Z$, find a tangent vector $v\in H^0(X,\N_{X/\PP^n})$ such that $v\in Z_i$ but $v\notin Z_j$ for $j\neq i$. Use {\small\verb+Versal Deformations+} to lift the first order deformation given by $v$ to higher order to get a one-parameter deformation $\pi:\X\to\Aff^1$ of $X$. In general, this may not be possible since the process of lifting to higher order may never terminate, resulting in a family defined by a power series. In practice however, for judicious choice of $v$, we almost always get a polynomial lifting after finitely many steps.
		\item We consider a general fiber $X'=\X_t$, $t\neq 0$ of $\X$. 
Suppose that $h^0(X',\N_{X'/\PP^n})=d_i$ and $T^2_{X'/\PP^n}=0$. Then $[X']$ lies on a component $B$ of $\hilb$ with $\dim B=d_i$. This implies that $Z_i$ is a component of $\TC_{[X]}\hilb$. Indeed, $[X]$ must also lie on $B$, and $\TC_{[X]}\hilb$ must have a component $Z_i'$ of dimension $d_i$ which contains $v$, since $X$ deforms to $X'$ with tangent direction $v$. Because $Z_i'$ contains $v$, it must be contained in $Z_i$, and equality follows from the equality in dimension. 
		\item Suppose that we have shown that $Z_i$ is a component of $\TC_{[X]}\hilb$ as described in step (iv). We now wish to determine for which component $B$ of $\hilb$  the tangent cone $\TC_{[X]} B$ contains $Z_i$. One approach is via deformation of the $X'$ above: if $X'$ deforms to some scheme $V$ for which we know $[V]$ lies on $B$, then $[X']$ lies on $B$ and $Z_i\subset \TC_{[X]} B$. A slightly more complicated approach is via degeneration of $X'$: suppose that $X'$ degenerates to a scheme $X_0$. If there is a component $B$ of $\hilb$ such that the degeneration direction from $X'$ to $X_0$, viewed as a deformation of $X_0$, only lies in $\TC_{[X_0]} B$ and no other components of $\TC_{[X_0]}\hilb$, then $[X']$ lies on $B$ and again $Z_i\subset \TC_{[X]} B$.
	\end{enumerate}
	Several difficulties may arise when attempting to put the above strategy into practice. For one, limits on computer memory and processor speed might make obstruction or primary decomposition calculations impossible. 
	Secondly, it could occur that the scheme $Z$ is not equal to $\TC_{[X]}\hilb$; this means that there will be some $Z_i$ which strictly contains a component of $\TC_{[X]}\hilb$. 
Thirdly, as mentioned in step (iii), lifting of one-parameter first order deformations might not terminate. 

For all the cases of present interest, these three problems almost never arise.
The only such problem we will encounter is in the few cases where some $Z_i$ is an embedded component. In these cases we can use deformation considerations to show that $Z_i$ does not correspond to a smoothing component of the Hilbert scheme. This is done in the final two examples of \S\ref{sec:toric}.

It can also occur in step (iv) that $h^0(X',\N_{X'/\PP^n})=d_i$ and $[X']$ is a smooth point of $\hilb$, but $T^2_{X'/\PP^n}\neq 0$. In such cases, an alternate strategy is needed to show that $[X']$ is indeed a smooth point of $\hilb$. One possible approach to deal with this problem is by using the structure of rolling factors, as we do for several cases in the proof of Theorem \ref{thm:re10}.

\section{Components of $\hilb_d$}\label{sec:comp}
In this section, we discuss the components of $\hilb_d$ which we shall encounter. In \S\ref{sec:dim}, we prove a general formula for the dimension of the Hilbert scheme component corresponding to a smooth Fano variety. In \S\ref{sec:compsmooth} we discuss those components corresponding to smooth Fano threefolds, and in \S\ref{sec:exotic} we discuss the non-smoothing components we encounter.

\subsection{Component Dimension}\label{sec:dim}
Before discussing specific components of the Hilbert schemes $\hilb_d$, we prove a result concerning Hilbert scheme component dimensions for Fano varieties in general. Recall from \S\ref{sec:def1} that given a  scheme $V \subset \PP^n$, the dimension of the tangent space of the Hilbert scheme at the corresponding point $[V]$ is just
 $h^0(\N_{V/\PP^n})$. For smooth Fano varieties, this can be computed as follows:
\begin{prop}\label{prop:h0N} Let $V\hookrightarrow \PP^n$ be a smooth
Fano variety in its anticanonical embedding and $\N_{V/\PP^n}$ the corresponding normal sheaf. Then
	$$
	h^0(\N_{V/\PP^n})=(n+1)^2-1-\chi(\Theta_V)
	$$
and $h^1(\N_{V/\PP^n})=0$, where $\Theta_V$ is the tangent sheaf of $V$.
	Furthermore, if $V$ is a threefold, then
	$$
	h^0(\N_{V/\PP^n})=g^2+3g+22-b_2+\frac{1}{2}b_3,
	$$
	where $g=\frac{1}{2}(-K_V)^3+1$ is the genus, and $b_2,b_3$
are the second and third Betti numbers.
\end{prop}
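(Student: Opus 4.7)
The plan is to combine the normal bundle sequence and the Euler sequence restricted to $V$ with Kodaira-type vanishing, and then to specialize to threefolds via Hirzebruch--Riemann--Roch.

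First I would write down the two exact sequences
$$0 \to \Theta_V \to \Theta_{\PP^n}|_V \to \N_{V/\PP^n} \to 0, \qquad 0 \to \CO_V \to \CO_V(1)^{\oplus(n+1)} \to \Theta_{\PP^n}|_V \to 0.$$
Since $V$ is Fano, Kodaira vanishing gives $H^i(\CO_V) = 0$ and $H^i(\CO_V(1)) = 0$ for $i > 0$; the second uses that $\CO_V(1) - K_V$ is a sum of ample classes. Linear normality of the embedding gives $h^0(\CO_V(1)) = n+1$, so the Euler sequence forces $H^i(\Theta_{\PP^n}|_V) = 0$ for $i \geq 1$ and $\chi(\Theta_{\PP^n}|_V) = (n+1)^2 - 1$. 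The normal bundle sequence then yields $\chi(\N_{V/\PP^n}) = (n+1)^2 - 1 - \chi(\Theta_V)$. To upgrade this equality of Euler characteristics to the statement about $h^0$, I need $H^i(\N_{V/\PP^n}) = 0$ for $i \geq 1$, which via the normal bundle sequence reduces to $H^j(\Theta_V) = 0$ for $j \geq 2$. By Serre duality this is $H^{n-j}(\Omega^1_V \otimes K_V)^*$, and Kodaira--Akizuki--Nakano vanishing applied to the ample line bundle $-K_V$ kills it whenever $n - j \leq n - 2$, i.e. for $j \geq 2$.

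For the anticanonical embedding of a Fano threefold, $n+1 = h^0(-K_V) = g+2$, giving $(n+1)^2 - 1 = g^2 + 4g + 3$. It remains to compute $\chi(\Theta_V)$. I would invoke Hirzebruch--Riemann--Roch for the bundle $\Theta_V$, which expresses $\chi(\Theta_V)$ as a universal rational combination of $c_1^3$, $c_1 c_2$, and $c_3$. These I evaluate geometrically: $c_1^3 = (-K_V)^3 = 2g - 2$, Noether's formula $c_1 c_2 = 24 \chi(\CO_V) = 24$, and $c_3 = \chi_{\mathrm{top}}(V) = 2 + 2b_2 - b_3$ (using $b_0 = b_6 = 1$, $b_1 = b_5 = 0$, $b_2 = b_4$ on a Fano threefold, by Poincaré duality and the vanishing of $h^{i,0}$). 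Substituting into the first formula yields $h^0(\N_{V/\PP^n}) = g^2 + 3g + 22 - b_2 + \tfrac{1}{2} b_3$ after routine simplification.

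The main obstacle is simply keeping the numerical coefficients in the HRR expansion of $\chi(\Theta_V)$ straight; the vanishing steps and the identifications of $c_1 c_2$ and $c_3$ are all standard consequences of the Fano hypothesis.
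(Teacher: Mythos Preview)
Your proof is correct and follows essentially the same route as the paper: both use Kodaira (respectively Kodaira--Akizuki--Nakano) vanishing to kill $H^i(\Theta_V)$ for $i\geq 2$ and $H^i(\Theta_{\PP^n}|_V)$ for $i\geq 1$, read off $h^0(\N)$ and $h^1(\N)=0$ from the normal sequence, and then for threefolds specialize via Hirzebruch--Riemann--Roch together with the identifications $c_1^3=2g-2$, $c_1c_2=24$, $c_3=2+2b_2-b_3$. One cosmetic slip: in your Serre-duality step the index should be $\dim V - j$ rather than $n-j$ (here $n$ is the dimension of the ambient $\PP^n$, not of $V$), but the conclusion $H^j(\Theta_V)=0$ for $j\geq 2$ is unaffected.
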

\begin{proof} By Kodaira vanishing, $h^1(\CO_V)=h^1(\CO_V(1))=0$. Kodaira vanishing also gives 
	$h^i(\Theta_V)=0$ for $i>1$ so
	$\chi(\Theta_V)=h^0(\Theta_V)-h^1(\Theta_V)$. Furthermore, it
follows from the Euler sequence that $h^1({{\Theta_{\PP^n}}}_{|V}
)=0$. The first claim then follows from the long exact cohomology
sequence coming from the normal sequence for $V$ in $\PP^n$.

Now assume that $\dim V=3$. By Hirzebruch-Riemann-Roch,
$$
\chi(\Theta_V)=\frac{1}{24}\deg(12c_1^3-19c_1c_2 + 12c_3)
$$
where the $c_i$ are the Chern classes of $\Theta_V$. We have that 
$\deg c_3=\chi_\mathrm{top}(V)=2+2b_2-b_3$ by Poincar\'e duality, and 
by definition of $g$, $\deg c_1^{3}=2g-2$.
Furthermore, an application of Hirzebruch-Riemann-Roch to $\CO_V$ gives $
\deg c_1c_2=24$.
Substituting these values into the above general dimension formula proves the second claim.
\end{proof}
\subsection{Smooth Fano Threefolds of Low Degree}\label{sec:veryample}\label{sec:compsmooth}
In Table \ref{table:fanos}, we list all families of smooth Fano threefolds of degree at most twelve. Degrees of general elements of these families and their topological invariants are taken from \cite{iskovskih:78a}, \cite{isk:80a}, and \cite{mori:81a}. Our names, referring both to the family and to general elements thereof, are non-standard. Below we calculate case by case whether the anticanonical divisor $-K_V$ is very ample. If so, we use Proposition \ref{prop:h0N} to calculate how many global sections the corresponding normal sheaf has. This gives us a list of all components of the Hilbert schemes $\hilb_d$ for $d\leq 12$ which correspond to smooth Fano threefolds, and the dimensions thereof. To summarize, we have the following:
\begin{prop}
The Hilbert schemes $\hilb_4$, $\hilb_6$, and $\hilb_8$ each have a single distinguished component, $\hilb_{10}$ has distinguished components $B_{84}$ and $B_{85}$ of dimensions $84$ and $85$, and $\hilb_{12}$ has distinguished components $B_{96}$, $B_{97}$, $B_{98}$, and $B_{99}$ of dimensions $96$, $97$, $98$, and $99$.
\end{prop}
\noindent The proof of this proposition follows immediately from Proposition \ref{prop:h0N} and the discussion in the remainder of this section.

\begin{lemma}\label{lemma:notva}
Let $V$ be a smooth Fano threefold with $\deg -K_V\leq 8$ and $-K_V$ very ample. Then $V$ is a complete intersection in its anticanonical embedding. In particular, $V_2$, $V_4'$, $V_6'$, $V_8'$, and $V_8''$ do not have very ample anticanonical divisor.
\end{lemma}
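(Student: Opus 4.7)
The plan is to compute $h^0(V,-kK_V)$ exactly in small degrees and compare with $h^0(\PP^{g+1},\CO(k))$. By Kodaira vanishing $h^i(V,-kK_V)=0$ for $i>0$, $k\ge 0$, and Hirzebruch--Riemann--Roch on the threefold $V$ using $(-K_V)^3=2g-2$, $-K_V\cdot c_2(\Theta_V)=24$ and $\chi(\CO_V)=1$ (exactly as in the proof of Proposition~\ref{prop:h0N}), one obtains
$$h^0(V,-kK_V)=\frac{k(k+1)(2k+1)}{6}(g-1)+2k+1.$$
For $k=1$ this is $g+2$, so the anticanonical map embeds $V$ in $\PP^{g+1}$ with $g=d/2+1$. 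When $d=2$ this would place the threefold $V$ inside $\PP^{3}$, forcing $V=\PP^{3}$; but $\deg\PP^{3}=1\neq 2$, so $-K_{V_2}$ cannot be very ample.

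For $d\in\{4,6,8\}$ (so $g\in\{3,4,5\}$) the restriction sequence $0\to I_V(k)\to\CO_{\PP^{g+1}}(k)\to\CO_V(k)\to 0$ gives the lower bound $\dim(I_V)_k\ge\binom{g+1+k}{k}-h^0(V,-kK_V)$. Substituting the formula above yields, respectively: a quartic generator in $I_V$ when $g=3$; a quadric $Q$ and an extra cubic $C$ (outside $Q\cdot H^0(\CO(1))$) when $g=4$; three linearly independent quadrics $Q_1,Q_2,Q_3$ when $g=5$. A dimension--degree count then shows that the chosen generators cut out $V$ in each case. For $g=3$, $V$ is an irreducible three-dimensional component of a quartic hypersurface of the same degree, hence equal. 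For $g=4$, non-degeneracy of $V$ forces $Q$ to be irreducible, so $S/(Q)$ is a domain; $C\notin (Q)$ by construction, so $Q,C$ form a regular sequence, and Bezout together with Cohen--Macaulayness of the intersection give $V=Q\cap C$ scheme-theoretically. For $g=5$ one shows analogously that $Q_1,Q_2,Q_3$ is a regular sequence, whence $V=Q_1\cap Q_2\cap Q_3$ by dimension and degree.

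Finally, the ``in particular'' assertion follows from the Lefschetz hyperplane theorem: a smooth complete intersection threefold in $\PP^{g+1}$ has $\pic\cong\ZZ$ and hence $b_2=1$, whereas each of $V_4',V_6',V_8'$ has $b_2=2$ by Table~\ref{table:fanos}. I expect the main obstacle to be verifying the regular sequence property in the $g=5$ case. My plan there is a Hilbert-series comparison: the Koszul resolution of a complete intersection of three quadrics in $\PP^{6}$ predicts the Hilbert-function values $1,7,25,63,\ldots$ in low degrees, and the closed formula above gives exactly these values for the homogeneous coordinate ring of $V$, so the Koszul relations among $Q_1,Q_2,Q_3$ cannot be non-trivial in the relevant range. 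Should this slick comparison prove insufficient, one can instead invoke the classical characterization of anticanonically embedded smooth Fano threefolds of genus $5$ as intersections of three quadrics in $\PP^{6}$.
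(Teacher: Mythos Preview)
Your approach is close in spirit to the paper's---both use Riemann--Roch to place $V$ in $\PP^{g+1}$ and then find the defining equations degree by degree---but you attempt more explicit arguments where the paper simply cites \cite{cheltsov:05a}. Your treatment of $g=3$ and $g=4$ is correct, and your Lefschetz argument for the ``in particular'' clause (smooth complete intersections have $b_2=1$, whereas $V_4',V_6',V_8'$ have $b_2=2$) is a genuine improvement over the paper, which merely asserts without justification that these varieties are not complete intersections.

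The real issue is your $g=5$ regular-sequence step. The Hilbert-series comparison does not prove what you want: knowing that the Hilbert function of $S/I_V$ agrees with that of a complete intersection of three quadrics does \emph{not} force $Q_1,Q_2,Q_3$ to be a regular sequence. You only have $J=(Q_1,Q_2,Q_3)\subset I_V$, hence $\dim(S/J)_k\ge\dim(S/I_V)_k$; a non-Koszul syzygy would make $\dim(S/J)_k$ \emph{larger} than the Koszul prediction, which is perfectly compatible with the inequality. The argument is circular: you would need $J=I_V$ to conclude equality of Hilbert functions, but $J=I_V$ is exactly what you are trying to prove. Concretely, one must rule out the possibility that all three quadrics vanish on a common $4$-fold component of $V(Q_1,Q_2)$ (e.g.\ a degree-$3$ scroll in $\PP^6$), and the Hilbert-function data alone does not do this. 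You anticipate this and offer the classical characterization of genus-$5$ anticanonical Fano threefolds as a fallback; that is precisely the route the paper takes, invoking \cite[Remark~1.9]{cheltsov:05a} to get that $V$ is cut out by quadrics. So your proof becomes complete once you drop the Hilbert-series heuristic and go straight to the citation.
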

\begin{proof}
Let $g=\frac{1}{2}(-K_V)^3+1$. Then $V\subset \PP^{g+1}$ by Riemann-Roch. Thus $g>2$ for dimension reasons. If $g=3$, then $V$ must be a quartic hypersurface.
If $g=4$, then $V$ is the intersection of a cubic and a quadric, see \cite[Theorem 2.14]{cheltsov:05a}. This follows for example by considering the long exact sequence  coming from twists of 
\begin{equation*}
	0\to \I\to \CO_{\PP^{g+1}}\to \CO_V\to 0
\end{equation*}
by $\CO(2)$ and $\CO(3)$.
Finally, for the case $g=5$, it follows from \cite[Remark 1.9]{cheltsov:05a} that $V$ must be cut out by quadrics. For degree reasons, $V$ must thus be a complete intersection.

For the statement regarding $V_2$, $V_4'$, $V_6'$, $V_8'$,  and $V_8''$, note that, by the Lefschetz hyperplane Theorem, none of these varieties is a complete intersection in projective space. 
\end{proof}
The varieties $V_4$, $V_6$, and $V_8$ all have very ample anticanonical divisors. Indeed, in their anticanonical embeddings they are complete intersections in projective space of degrees $4$, $(2,3)$, and $(2,2,2)$. Furthermore, the varieties $V_{10}$ and $V_{12}$ have very ample anticanonical divisors, cf.~\cite{mukai:2004a}.
In its anticanonical embedding, $V_{10}$ is the intersection of the Grassmannian $G(2,5)$ in its Pl\"ucker embedding with a quadric and two hyperplanes. Likewise, $V_{12}$ is the intersection of the orthogonal Grassmannian $OG(5,10)$ in its Pl\"ucker embedding with seven hyperplanes.
We now deal with the remaining cases.

\begin{prop}\label{prop:caneq} Any smooth degree ten or twelve Fano threefold has very ample anticanonical divisor except for $\PP^1\times S_2$. Equations for 
their ideals in the anticanonical embedding are:
	\begin{enumerate}
\item 
In its anticanonical embedding in $\PP^7$, the ideal of of $V_{10}'$ is given by the minors of
\begin{align*}
M=	\left(\begin{array}{c c c c}
	x_0&y_0&z_0&w_0\\
	x_1&y_1&z_1&w_1
	\end{array}\right)
\end{align*}
together with cubics $f_0,f_1,f_2$, where $f_0$ is a general cubic which can be rolled twice to get $f_1$ and $f_2$.

			\item In its anticanonical embedding in
$\PP^8$, the ideal of $V_{12,2,6}$ is given by the minors of
\begin{align*}
	\left(\begin{array}{c c c c c}
		x_0&x_1&y_0&z_0&w_0\\ 
		x_1&x_2&y_1&z_1&w_1\\ 
	\end{array}\right)
\end{align*}
together with cubics $f_0,f_1,f_2,f_3$, where $f_0$ is a general cubic which can be rolled three times to get $f_1$, $f_2$, and $f_3$.

	\item In its anticanonical embedding in $\PP^8$, $V_{12,2,9}$
is defined by the $2\times 2$ minors of
			$$
			\left ( \begin{array}{c c c} u&x_1&y_0\\
y_1&v&x_2\\ x_0&y_2 &w\\
			\end{array}\right )
			$$
			together with a general quadric.
		\item In its anticanonical embedding in $\PP^8$,
$V_{12,3}$ is defined by the $2\times 2$ minors of the two matrices
\begin{align*}
	\left(
	\begin{array}{c c c c}
	x_{000}&x_{100}&x_{001}&x_{101}\\
	x_{010}&x_{110}&x_{011}&x_{111}\\
	\end{array}\right)
	\qquad \left(
	\begin{array}{c c c c}
	x_{000}&x_{010}&x_{001}&x_{011}\\
	x_{100}&x_{110}&x_{101}&x_{111}\\
	\end{array}\right)
			\end{align*} in
$\CC[x_{ijk},t]_{i,j,k\in\{0,1\}}$ along with a general quadric.
	\end{enumerate}
\end{prop}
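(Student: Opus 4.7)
The plan is to proceed case by case, each time combining a classical description of the Fano family (from Iskovskih \cite{iskovskih:78a}, Mori--Mukai \cite{mori:81a}, or Mukai \cite{mukai:2004a}) with an explicit computation of its anticanonical embedding. In each case the strategy is the same: identify the variety as a divisor on a simpler ambient variety whose ideal in projective space is well understood (a scroll or a Segre variety); use adjunction to verify that $-K_V$ agrees with the restriction of the hyperplane class of this embedding, which simultaneously settles very ampleness; then read off the equations of the proposition from the defining equations of the ambient variety together with the divisor class. A dimension and degree check, together with a comparison of Betti numbers, then confirms that the resulting threefold lies in the correct family.

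For cases (i) and (ii), I would use that $V_{10}'$ and $V_{12,2,6}$ admit an extremal contraction onto $\PP^1$ coming from their Picard rank two structure, realizing them as divisors on the scrolls of types $(1,1,1,1)$ and $(2,1,1,1)$, respectively. The first is a divisor of class $3D-2F$, the second of class $3D-3F$, where $D$ is the hyperplane class and $F$ the fiber class. The formalism of rolling factors from Section \ref{sec:rolling} then produces from a single general cubic $f_0$ two further cubics $f_1,f_2$ (resp.\ three further cubics $f_1,f_2,f_3$) which, together with the $2\times 2$ minors of the displayed matrix $M$, cut out the variety. Adjunction on the scroll yields $-K_V = D|_V$; computing $D^3\cdot [V]$ gives the degree $10$ (resp.\ $12$), confirming that this is the anticanonical embedding into $\PP^7$ (resp.\ $\PP^8$).

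For (iii) and (iv), I would use that $V_{12,2,9}$ is a general hyperquadric section of the Segre embedding $\PP^2\times \PP^2 \hookrightarrow \PP^8$, whose ideal is generated by the $2\times 2$ minors of a generic $3\times 3$ matrix, while $V_{12,3}$ is the intersection of a general quadric in $\PP^8$ with the cone over the Segre embedding $(\PP^1)^3 \hookrightarrow \PP^7 \subset \PP^8$. In the latter case one checks that the $2\times 2$ minors of any two of the three flattening matrices of a $2\times 2\times 2$ tensor already generate the ideal of $(\PP^1)^3$, a short combinatorial verification made by exhibiting the third flattening's minors as explicit combinations of the first two. In both subcases adjunction shows that the restriction of $\CO(1)$ is the anticanonical class (on $\PP^2\times\PP^2$ one computes $-K = 3H$, on the cone one computes $-K = 3H$ using that the base has $-K=2H$ and the cone contributes an extra $H$), and a degree-and-Betti-number check confirms the identification with the appropriate Mori--Mukai family.

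The main obstacle I expect is the initial identification of each abstract Fano family with the concrete model given here. In practice, this requires careful bookkeeping in the Mori--Mukai classification: matching the triple $(d, b_2, b_3)$ uniquely pins down the family, but producing the explicit realization as a divisor on the stated ambient variety is where the real content lies. Once these identifications are in place, the remainder of the proof reduces to routine adjunction and intersection-theoretic computations on the scrolls and Segre varieties involved, together with checking that the resulting linear systems of cubics and quadrics are indeed base-point-free and cut out smooth members.
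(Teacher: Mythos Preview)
Your proposal is correct and follows the same overall strategy as the paper: realize each variety as a divisor on a scroll or Segre variety, use adjunction to identify $-K_V$ with $\CO_V(1)$, and then match with the classification. The computations you outline (divisor classes $3D-2F$ and $3D-3F$ on the two scrolls, $-K_S = 4D-(d-k)F$, etc.) are exactly those appearing in the paper's argument.

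There are two differences worth noting. For (i) and (ii) the paper reverses the direction of the identification: rather than starting from an extremal contraction in the Mori--Mukai description and deriving the scroll model, it starts from the equations, checks that the divisor class is basepoint free (hence the general member is smooth), verifies $-K_V=\CO_V(1)$ by adjunction, and then identifies the family by \emph{elimination}: since the resulting $V$ is not cut out by quadrics it cannot be $V_{10}$ (resp.\ $V_{12}$, $V_{12,2,9}$, or $V_{12,3}$), so it must be $V_{10}'$ (resp.\ $V_{12,2,6}$). This avoids the bookkeeping you flag as the main obstacle. For (iv), rather than matching $(d,b_2,b_3)$, the paper observes that projection of the quadric section of the cone from the cone point $\{x_{ijk}=0,\ t=1\}$ exhibits $V$ directly as a double cover of $\PP^1\times\PP^1\times\PP^1$ branched in a $(2,2,2)$ divisor, which is the Mori--Mukai definition of $V_{12,3}$; this is quicker than a Betti-number computation. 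Your route works too, but these shortcuts make the paper's argument shorter.
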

\begin{proof} 
The variety  $\PP^1\times S_2$ is the product of $\PP^1$ with a del Pezzo surface $S_2$ of degree two, and it is well-known that the latter does not have very ample anticanonical divisor.
	
	For $V_{10}'$, note that the variety $V$ described by the equations in (i) is a divisor of type $3D-2F$ on a scroll $S$ of type $(1,1,1,1)$. Since this divisor is basepoint free, $V$ is smooth. The divisor $-K_S$ is equivalent to $4D-2F$ (see e.g. \cite[pp. 23]{kollar:00a}), and the adjunction formula thus shows that $-K_V=\CO_V(1)$. Since $V$ is not cut out by quadrics, it cannot be $V_{10}$ and must thus be $V_{10}'$.

For $V_{12,2,6}$, note that the variety $V$ described by the equations in (ii) is a divisor of type $3D-3F$ on a scroll $S$ of type $(2,1,1,1)$. Since this divisor is basepoint free, $V$ is smooth. The divisor $-K_S$ is equivalent to $4D-3F$, and the adjunction formula thus shows that $-K_V=\CO_V(1)$. Since $V$ is not cut out by quadrics, it cannot be $V_{12}$ or one of the other two degree twelve threefolds dealt with below. 

For the remaining cases, we use the descriptions for the Fano threefolds found in
\cite{mori:81a}. 
The threefold $V_{12,2,9}$ is a divisor of bidegree $(2,2)$ in $\PP^2\times
\PP^2$. The equations for the Segre embedding of $\PP^2\times \PP^2$
in $\PP^8$ are given by the $2\times 2$ minors of a general $3\times
3$ matrix. In this embedding, a divisor of bidegree $(2,2)$ is
given by a quadric. It follows from the adjunction formula that this
is in fact the anticanonical embedding, so in particular $-K_{V_{12,2,9}}$ is very ample.

The threefold $V_{12,3}$ is a double cover of $\PP^1\times\PP^1\times\PP^1$ with
branch locus a divisor of tridegree $(2,2,2)$.  
Now, the equations listed in (iv) describe a smooth variety $V$ which is the cone over $\PP^1\times\PP^1\times\PP^1$ embedded via $\CO(1,1,1)$, intersected with a general quadric. Projection from the point $\{x_{ijk}=0,t=1\}$ gives a map $V\to\PP^1\times\PP^1\times\PP^1$ whose ramification locus is a divisor of type $(2,2,2)$. Thus, $V$ equals $V_{12,3}$. To check that this is in fact its anticanonical embedding, we again use adjunction: a straightforward toric calculation shows that the anticanonical divisor on the cone over $\PP^1\times\PP^1\times\PP^1$ is the pullback of $\CO(3)$, so intersection with a quadric gives that $-K_V$ is the pullback of $\CO(1)$.
\end{proof}
\begin{remark}
  All components $B$ of $\hilb_d$ for $d\leq 12$ corresponding to smooth Fano threefolds are unirational, that is, there is some dominant rational map $\Aff^k\dashrightarrow B$. Indeed, Mori and Mukai show that the variety parametrizing any family of smooth Fano threefolds is unirational \cite{mm:cla}.  
Since the map $H_X\to \Def_X$ is smooth for any Fano threefold $X$, it follows that the corresponding Hilbert scheme component is also unirational.
\end{remark}
\subsection{Non-smoothing Components of $\hilb_d$}\label{sec:exotic}
In our study of $\hilb_{10}$ and $\hilb_{12}$, we will encounter three additional components which do not correspond to smooth Fano threefolds, but instead non-smoothable trigonal Fano threefolds with Gorenstein singularities, see \cite{cheltsov:05a}.

We first describe an $88$-dimensional component $B_{88}^\dagger$ of $\hilb_{10}$. Consider the 
matrix
$$M=\left(
\begin{array}{c c c}
x_0&y_2&y_1\\
y_2&x_1&y_0\\
y_1&y_0&x_2\\
\end{array}\right).
	$$
Let $g_0,g_1,g_2$ be general quadrics in $x_i,y_j,z_1,z_2$, and let $f_0,f_1,f_2$ be the cubics defined by
$$
M\cdot\left(\begin{array}{c} g_0\\ g_1\\ g_2\end{array}\right)=\left(\begin{array}{c} f_0\\ f_1\\ f_2\end{array}\right).
$$
Note that $f_1$ and $f_2$ have been constructed from $f_0$ in a manner similar to rolling factors.

Let $I$ be the ideal generated by the $2\times 2$ minors of $M$ and $f_0,f_1,f_2$.
This cuts out a singular degree $10$ Fano variety $V\subset\PP^7$ corresponding to a point $[V]\in\hilb_{10}$. Indeed, this is the case $T_{3}$ of \cite{cheltsov:05a}.
Using {\small\verb+Macaulay2+}, we compute that $h^0(V,\N)=88$, and that all deformations of $V$ come from perturbing the quadrics $g_0,g_1,g_2$ and are unobstructed. Thus, $[V]$ is a smooth point on an $88$-dimensional component $B_{88}^\dagger$  of $\hilb_{10}$.

The remaining two non-smoothing components may be nicely described using rolling factors.
The Hilbert scheme $\hilb_{10}$ has an additional $84$-dimensional component $B_{84}^\dagger$ which is \emph{not} the component $B_{84}$.
Consider the matrix  
$$M=\left(
\begin{array}{c c c c c}
x_0&x_1&y_0&y_1\\
x_1&x_2&y_1&y_2\\
\end{array}\right).
	$$
With additional variables $z_1,z_2$, its maximal minors define a scroll of type $(2,2,0,0)$. Let $f_0$ be a general cubic which can be rolled $2$ times to $f_1$ and $f_2$. The ideal generated by the minors of $M$ together with these three cubics cuts out out a singular degree $10$ Fano variety $V\subset\PP^7$ corresponding to a point $[V]\in\hilb_{10}$. Indeed, this is the case $T_{9}$ of \cite{cheltsov:05a}.

Using {\small\verb+Macaulay2+}, we compute that $h^0(V,\N)=84$, and that all deformations of $V$ are of pure rolling factor type. Thus, $[V]$ is a smooth point on a $84$-dimensional component of $\hilb_{10}$, and $V$ cannot be smoothed. It follows that the component $B_{84}^\dagger$ of $\hilb_{10}$ upon which $[V]$ lies is not $B_{84}$.

The Hilbert scheme $\hilb_{12}$ has an additional $99$-dimensional component $B_{99}^\dagger$ which is \emph{not} the component $B_{99}$.
Consider the matrix  
$$M=\left(
\begin{array}{c c c c c}
x_0&x_1&x_2&x_3&y_0\\
x_1&x_2&x_3&x_4&y_1\\
\end{array}\right).
	$$
With additional variables $z_1,z_2$, its maximal minors define a scroll of type $(4,1,0,0)$. Let $f_0$ be a general cubic which can be rolled $3$ times to $f_1,f_2$, and $f_3$. The ideal generated by the minors of $M$ together with these four cubics cuts out out a singular degree $12$ Fano variety $V\subset\PP^8$ corresponding to a point $[V]\in\hilb_{12}$. Indeed, this is the case $T_{25}$ of \cite{cheltsov:05a}.

Using {\small\verb+Macaulay2+}, we compute that $h^0(V,\N)=99$, and that the obstruction space $T_V^2$ vanishes. Thus, $[V]$ is a smooth point on a $99$-dimensional component of $\hilb_{12}$. 
All deformations of $V$ are of pure rolling factor type, so $V$ cannot be smoothed. Thus, the component $B_{99}^\dagger$ of $\hilb_{12}$ upon which $[V]$ lies is not $B_{99}$.

\begin{remark}
	It follows from the description in  \cite[Theorem 1.6]{cheltsov:05a} of Gorenstein trigonal Fano threefolds that each family is parametrized by a unirational variety. By arguments similar to in the previous section, one can show that $B_{84}^\dagger$, $B_{88}^\dagger$, and $B_{99}^\dagger$ are also unirational.
\end{remark}

\begin{remark}
Not every type of singular trigonal Fano  described in \cite[Theorem 1.6]{cheltsov:05a} describes a new Hilbert scheme component. For example, a routine calculation shows that the case of $T_7$ (a scroll of type $(2,1,1,0)$ and a cubic rolled twice) always has a non-scrollar deformation which deforms it to $V_{10}$.
On the other hand, there may be additional components of $\hilb_{10}$ and $\hilb_{12}$ not discussed here whose general elements are singular Fano varieties.
\end{remark}

\section{Stanley-Reisner Schemes and Degenerations}\label{sec:sr} 
In this section, we first recall some basic facts about simplicial
complexes and Stanley-Reisner schemes, see for example
\cite{stanley:83a}.
We will then discuss degenerations to particular Stanley-Reisner schemes.
This will be fundamental to our study of $\hilb_d$.
\subsection{Stanley-Reisner Basics}
Let $[n]$ be the set $\{0,\ldots,n\}$ and 
$\Delta_n$ be the full simplex $2^{[n]}$. An abstract
\emph{simplicial complex} is any subset $\K\subset\Delta_n$ such that
if $f\in\K$ and $g\subset f$, then $g\in \K$. Elements $f\in\K$ are
called \emph{faces}; the dimension of a face $f$ is $\dim
f:=\#f-1$. Zero-dimensional faces are called \emph{vertices};
one-dimensional faces are called \emph{edges}. The \emph{valency} of a
vertex is the number of edges containing it. 
Two 
simplicial complexes are isomorphic if there is a bijection of the
vertices inducing a bijection of all faces. We will not differentiate
between isomorphic complexes.

Given two simplicial complexes $\K$ and $\mcL$, their \emph{join} is
the simplicial complex
$$
\K * \mcL=\{f\vee g\ | \ f\in\K,\ g\in\mcL\},
$$
where $f\vee g$ denotes the disjoint union of $f$ and $g$.
To any simplicial complex $\K\subset\Delta_n$, we associate a
square-free monomial ideal $I_\K\subset \CC[x_0,\ldots,x_n]$
$$
I_\K:=\langle x_p \ | \ p\in\Delta_n\setminus\K\rangle
$$
where for $p\in\Delta_n$, $x_p:=\prod_{i\in p}x_i$. This gives rise to
the \emph{Stanley-Reisner ring} $A_\K:=\CC[x_0,\ldots,x_n]/I_\K$ and a
corresponding projective scheme $\PP(\K):=\Proj A_\K$ which we call a
Stanley-Reisner scheme.  Certain properties of the scheme $\PP(\K)$ are reflected in the combinatorics of the
complex $\K$. For example, each face $f\in\K$ corresponds to some $\PP^{\dim
f}\subset \PP(\K)$ and the intersection relations among these projective
spaces are identical to those of the faces of $\K$. In particular,
maximal faces of $\K$ correspond to the irreducible components of $X$.

In this paper, we will only consider Stanley-Reisner schemes of the form $\PP(\K*\Delta_0)$, where $\K$ is topologically the triangulation of a two-sphere. Such schemes are Gorenstein Fano threefolds, and are embedded via the anticanonical divisor, see \cite[Proposition 2.1]{paperone}.
\subsection{Degenerations to Stanley-Reisner Schemes}
We recall the correspondence between unimodular triangulations and degenerations of toric varieties. Consider some lattice $M$ and some lattice polytope
$\nabla\subset M_\QQ$ in the associated $\QQ$-vector space. By
$\PP(\nabla)$ we denote the toric variety
$$
\PP(\nabla)=\Proj \CC[S_\nabla]
$$
where $S_\nabla$ is the semigroup in $M\times \ZZ$ generated by the
elements $(u,1)$, $u\in \nabla \cap M$. By Theorem 8.3 and Corollary
8.9 of \cite{sturmfels:96a}, square-free initial ideals of the toric
ideal of $\PP(\nabla)$ are exactly the Stanley-Reisner ideals of
unimodular regular triangulations of $\nabla$, see loc.~cit.~for definitions.

We now describe the triangulated two-spheres we will need. 
First of all, let $T_4=\partial \Delta_3$, $T_5=(\partial \Delta_2)*(\partial \Delta_1)$, and for $6\leq i \leq 10$, let $T_i$ be the unique triangulation of the sphere with $i$ vertices having valencies four and five.\footnote{The $T_i$ arise naturally as the boundary complexes of the convex deltahedra (excluding the icosahedron).}
For concrete realizations of these triangulations, see \cite[Figure 1]{paperone}. The corresponding Stanley-Reisner schemes $\PP(T_i*\Delta_0)$ satisfy some nice properties:
\begin{thm}[{See \cite[\S 3]{paperone}}]\label{thm:nice}
	Let $4\leq i \leq 10$ and $d=2i-4$ and let $V_d$ be a general rank one index one degree $d$ smooth Fano threefold. Then $V_{d}$ degenerates to $\PP(T_i*\Delta_0)$ in its anticanonical embedding. Furthermore, $[\PP(T_i*\Delta_0)]\in\hilb_d$ is a smooth point.
\end{thm}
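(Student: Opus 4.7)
The statement has two distinct parts: the existence of a degeneration from $V_d$ to $\PP(T_i*\Delta_0)$, and the smoothness of the corresponding point in $\hilb_d$. My plan is to handle these in sequence, since once the degeneration is established the smoothness reduces to a tangent space dimension count.

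For the degeneration, I would pass through a toric intermediate. Using the Kreuzer--Skarke classification of Gorenstein toric Fano threefolds, for each $i$ with $4\le i\le 10$ one should be able to locate a toric Fano threefold $\PP(\nabla_i)$ whose moment polytope $\nabla_i$ admits a unimodular regular triangulation with boundary complex isomorphic to $T_i$ (so that after coning the vertex of $\Delta_0$ one recovers the full triangulation $T_i*\Delta_0$). By Theorem~8.3 of \cite{sturmfels:96a} such a triangulation supplies a square-free initial ideal equal to $I_{T_i*\Delta_0}$, yielding a flat Gr\"obner degeneration $\PP(\nabla_i)\rightsquigarrow \PP(T_i*\Delta_0)$. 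It then remains to show that $V_d$ degenerates to $\PP(\nabla_i)$. For $d\le 8$ the variety $V_d$ is a complete intersection in projective space, so this is just an explicit specialization of the coefficients of the defining equations. For $d=10,12$ one uses the Grassmannian descriptions recorded in Proposition~\ref{prop:caneq} together with the classical Stanley--Reisner degeneration of $G(2,5)$ (Gonciulea--Lakshmibai) and an analogous sagbi-type degeneration of $OG(5,10)$, restricted to the relevant hyperplane and quadric sections. The remaining cases $d=14,16$ are handled in the same spirit using the Mukai models.

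Granted the degeneration, $[\PP(T_i*\Delta_0)]$ lies on the same irreducible component of $\hilb_d$ as $[V_d]$; call this component $B$. To conclude smoothness at $[\PP(T_i*\Delta_0)]$ it suffices to check that
\[
\dim T_{[\PP(T_i*\Delta_0)]}\,\hilb_d \;=\; h^0\bigl(\N_{\PP(T_i*\Delta_0)/\PP^i}\bigr)
\]
equals $\dim B$. By the Kleppe comparison reviewed in Section~\ref{sec:def1}, this dimension equals $(T^1_{A/S})_0$ where $A$ is the Stanley--Reisner ring of $T_i*\Delta_0$, which is computable combinatorially in terms of links of faces (Altmann--Christophersen), or directly in Macaulay2 since $I_{T_i*\Delta_0}$ is a small explicit monomial ideal. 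The target dimension $\dim B$ is read off from Table~\ref{table:fanos}. Matching the two numbers forces $[\PP(T_i*\Delta_0)]$ to lie on no other component and to be a smooth point of $B$.

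The main obstacle is the passage from $V_d$ to the toric intermediate $\PP(\nabla_i)$ in the higher-degree cases where $V_d$ is not a complete intersection: one must verify that the relevant Pl\"ucker-type relations survive the Gr\"obner degeneration, or exhibit the degeneration by an alternative route such as deforming inside a fixed scroll via rolling factors as in Section~\ref{sec:rolling}. The combinatorial $T^1$ computation is tedious but algorithmic, and the restriction that every vertex of $T_i$ has valency four or five keeps the bookkeeping manageable.
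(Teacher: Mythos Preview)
The paper does not supply its own proof of this theorem: it is stated with the attribution ``See \cite[Section~3]{paperone}'' and used as a black box. So there is no in-paper argument to compare your proposal against; what follows is an assessment of your outline on its own terms and in relation to what the cited source actually does.

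Your two-step architecture (degeneration, then tangent-space dimension count) is sound, and the smoothness half is exactly right: once $[\PP(T_i*\Delta_0)]$ is known to lie on the component $B$ containing $[V_d]$, the equality $h^0(\N_{\PP(T_i*\Delta_0)/\PP^i})=\dim B$ forces the point to be smooth on $\hilb_d$. In fact the cited paper proceeds the other way around: it first computes the cotangent cohomology of $A_{T_i*\Delta_0}$ combinatorially (the valency $\le 5$ hypothesis is precisely what makes $(T^2_A)_0$ vanish), so smoothness is established \emph{before} any degeneration is produced, and then the degeneration follows because any flat family hitting the Stanley--Reisner point must sit inside the unique component through it.

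The part of your proposal that is shakier is the detour through a toric intermediate $\PP(\nabla_i)$. This is unnecessary and introduces a genuine search problem (finding, for each $i$, a reflexive polytope with the right unimodular triangulation \emph{and} an explicit degeneration from $V_d$). The cited argument bypasses this: for $d\le 8$ one degenerates the complete-intersection equations directly to monomials; for $d=10,12,14,16$ one uses the Mukai linear-section models and known sagbi/Gr\"obner degenerations of the ambient homogeneous varieties ($G(2,5)$, $OG(5,10)$, $G(2,6)$, $LG(3,6)$) to their Stanley--Reisner limits, checking that the generic linear section of the limit has the desired combinatorial type $T_i*\Delta_0$. Your sketch gestures at this for $d=10,12$ but then reverts to the toric-intermediate plan for $d=14,16$; committing to the direct Mukai-model degeneration throughout would close the gap you flag as the ``main obstacle.''
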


This theorem alone allows us to determine the position of toric Fano threefolds of degree $d$ with at most Gorenstein singularities in $\hilb_d$ for $2<d< 10$:

\begin{prop}\label{prop:remain0}
	For $d=4,6,8$, let $X$ be a toric Fano threefold of degree $d$ with at most Gorenstein singularities. Then $[X]$ is a smooth point on the component of $\hilb_d$ corresponding to $V_d$. 
In particular, $X$ always admits an embedded smoothing to a smooth Fano threefold.
\end{prop}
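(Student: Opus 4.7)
The plan is to reduce the proposition entirely to Theorem~\ref{thm:nice} via an explicit finite computation. For each of the three values $d=4,6,8$, set $i=(d+4)/2 \in\{4,5,6\}$. Let $X$ be any toric Fano threefold in $\tft_d$; by the Kreuzer–Skarke classification, there are only finitely many such $X$ up to isomorphism, and each is determined by its reflexive moment polytope $\nabla$. Recall that a regular unimodular triangulation $\T$ of $\nabla$ yields a square-free initial ideal of the toric ideal of $\PP(\nabla)$ equal to the Stanley–Reisner ideal of $\T$, and hence a flat degeneration of $X=\PP(\nabla)$ to $\PP(\T)$ inside the anticanonical $\PP^{d/2+1}$.

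The key computational step is to verify, case-by-case using \texttt{TOPCOM}, that every such $\nabla$ admits a regular unimodular triangulation of the form $T_i * \Delta_0$. For $i=4,5,6$ the complex $T_i$ has only $4$, $5$, or $6$ vertices, so there are relatively few combinatorial types of triangulation to search through, and the degeneration directions obtained from these triangulations lie in the secondary fan of $\nabla$; this makes the search tractable despite the number of polytopes to check. If such a triangulation exists for every $\nabla$ in the (finite) list, we conclude that $X$ degenerates to $\PP(T_i * \Delta_0)$ in its anticanonical embedding.

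Once the degeneration is in hand, Theorem~\ref{thm:nice} immediately places $[X]$ on the same irreducible component $B$ of $\hilb_d$ as $[V_d]$: indeed, the closure of the one-parameter family specializing $X$ to $\PP(T_i*\Delta_0)$ is irreducible, and its special fiber $[\PP(T_i*\Delta_0)]$ lies on the unique component $B$ through it (uniqueness coming from smoothness of that point). To upgrade this to smoothness of $[X]$, I would invoke upper semicontinuity of $h^0(\N)$ along the degeneration:
\[
\dim B \;\le\; h^0(\N_{X/\PP^n}) \;\le\; h^0(\N_{\PP(T_i*\Delta_0)/\PP^n}) \;=\; \dim B,
\]
where the first inequality is the general lower bound on the tangent space dimension at a point of a scheme, and the last equality uses that $[\PP(T_i*\Delta_0)]$ is a smooth point of $B$ by Theorem~\ref{thm:nice}. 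Hence $[X]$ is a smooth point of $\hilb_d$ lying on $B$, which also rules out $[X]$ lying on any other component. The final clause on embedded smoothings then follows because a general point of $B$ corresponds to $V_d$.

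The main obstacle is the combinatorial/computational one: ensuring that \emph{every} moment polytope of a Gorenstein toric Fano threefold of degree $4$, $6$, or $8$ really does admit a regular unimodular triangulation of the prescribed join form $T_i * \Delta_0$. There is no a priori reason to expect this, so the proof is genuinely a finite verification. If some polytope failed to admit such a triangulation, we would instead need to invoke the more elaborate tangent-cone strategy of Section~\ref{sec:def2}; the point of the proposition is that in low degree this fallback is unnecessary.
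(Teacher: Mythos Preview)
Your proposal is correct and follows essentially the same route as the paper: use \texttt{TOPCOM} to verify, for every Gorenstein toric Fano threefold $X$ of degree $d=2i-4\in\{4,6,8\}$ in the Kreuzer--Skarke list, that its moment polytope admits a regular unimodular triangulation of the form $T_i*\Delta_0$, and then invoke Theorem~\ref{thm:nice}. Your explicit semicontinuity chain $\dim B \le h^0(\N_X) \le h^0(\N_{\PP(T_i*\Delta_0)}) = \dim B$ makes precise the step the paper leaves implicit when it asserts that $[X]$ is a smooth point.
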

\begin{proof}
  Using the classification of such varieties by \cite{kreuzer} and computer calculations with {\small\verb+TOPCOM+}, we verify that for any such variety $X$ of degree $d=4,6$, its moment polytope 
has a regular unimodular triangulation of the form $T_i*\Delta_0$, $i=4,5$. Thus, $X$ degenerates to $\PP(T_i*\Delta_0)$, so $[X]$ is a smooth point of $\hilb_d$ on the same component as  $[V_d]$.

For $d=8$, similar calculations show that the moment polytopes have regular  unimodular triangulations of the form $T_6*\Delta_0$ or of the form $T_6'*\Delta_0$, where $T_6'$ is the unique triangulation of the two-sphere with valencies $3,3,4,4,5,5$. For any $X$ with a triangulation of the former sort, as above $[X]$ is a smooth point of $\hilb_8$ on the same component as $[V_8]$. On the other hand, $[\PP(T_6'*\Delta_0)]$ is a smooth point of $\hilb_8$ by \cite[Corollary 2.5]{ishida:81a}. Since there are polytopes admitting both types of triangulations, this point must thus lie on the same component as $[V_8]$. Thus, varieties $X$ with triangulations of the latter sort also are smooth points of $\hilb_8$ on the same component as $[V_8]$.
\end{proof}

\begin{ex}
There is a single toric Fano threefold of degree $4$ with Gorenstein singularities, which is cut out by the quartic $x_1x_2x_3x_4-x_0^4$. A degeneration to $\PP(T_4*\Delta_0)$ is given by degenerating the quartic to its first term.
\end{ex}

We will deal with the degree $10$ case in the following section. For the degree $12$ case, we will need an additional simplicial complex. Let $T_8'$, the bipyramid over the hexagon, be the unique triangulation of the sphere with valencies $4,4,4,4,4,4,6,6$. This triangulation is pictured in Figure \ref{fig:tri}, where the hollow dot represents the point at infinity.
\begin{figure}
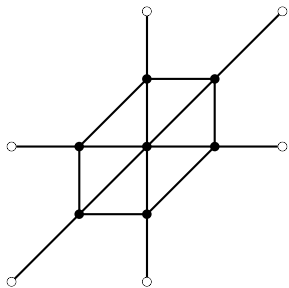
 
	\begin{center}
\tritwelve
\end{center}
\caption{The triangulation $T_8'$}\label{fig:tri}
\end{figure}

The Stanley-Reisner scheme corresponding to this triangulation also arises as a degeneration of smooth Fano threefolds: 

\begin{prop}\label{prop:srdegen} 
		The smooth Fano threefolds $V_{12}$, $V_{12,2,9}$ and $V_{12,3}$ all degenerate to $\PP(T_8'*\Delta_0)$.
\end{prop}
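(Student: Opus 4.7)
The plan is to construct, for each of the three varieties $V\in\{V_{12}, V_{12,2,9}, V_{12,3}\}$, an explicit two-step degeneration to $\PP(T_8'*\Delta_0)$: first to a Gorenstein toric Fano threefold $X_V$ of degree $12$, and then from $X_V$ to the Stanley-Reisner scheme. Concatenating these two degenerations (after a base change that matches the parameter speeds) yields the statement.

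For the first step I would exploit the explicit descriptions given by Proposition \ref{prop:caneq}. For $V_{12,2,9}$ and $V_{12,3}$, the defining ideals consist of $2\times 2$ minors of matrices together with a \emph{general} quadric. Replacing the general quadric with a carefully chosen binomial (or monomial) quadric yields a toric limit, since the determinantal part already generates a toric ideal: the ideal of the Segre $\PP^2\times\PP^2$ in the first case, and of the cone over $\PP^1\times\PP^1\times\PP^1$ in the second. The binomial must be chosen so that the resulting quotient is the coordinate ring of a Gorenstein toric Fano threefold of degree $12$. For $V_{12}$, which is cut out in $OG(5,10)$ by a quadric and hyperplanes, one can use a known toric (e.g., SAGBI) degeneration of the orthogonal Grassmannian and then specialize the quadric and hyperplanes appropriately.

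For the second step I would check, using {\small\verb+TOPCOM+}, that the moment polytope $\nabla_V$ of each toric limit $X_V$ admits a regular unimodular triangulation combinatorially equivalent to $T_8'*\Delta_0$. By the correspondence between regular unimodular triangulations and square-free initial ideals (Theorem 8.3 and Corollary 8.9 of \cite{sturmfels:96a}), such a triangulation produces a flat degeneration $X_V\rightsquigarrow \PP(T_8'*\Delta_0)$ via the corresponding Gr\"obner degeneration. Since $\PP(T_8'*\Delta_0)$ embeds in $\PP^8$ with the same Hilbert polynomial as our Fano threefolds (and the triangulation has $9$ vertices), this step is a purely combinatorial verification given the Kreuzer-Skarke list of Gorenstein toric Fano threefolds.

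The principal obstacle is identifying the correct toric limit in the first step: not every Gorenstein toric Fano threefold of degree $12$ has a moment polytope admitting a $T_8'*\Delta_0$-triangulation, so the binomial quadric for $V_{12,2,9}$ and $V_{12,3}$, and the toric degeneration of $OG(5,10)$ for $V_{12}$, must each be chosen to land on one of the (a priori few) toric Fano threefolds whose polytope does support such a triangulation. Since these candidates are enumerated explicitly in the Graded Rings Database, the search is finite and can be executed by matching triangulation data against the classification list.
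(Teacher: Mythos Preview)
Your strategy is sound and would work, but it differs from the paper's proof in two notable ways.

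For $V_{12,2,9}$ and $V_{12,3}$ the paper is more direct than your two-step plan. Rather than first landing on a Gorenstein toric Fano and then invoking a {\small\verb+TOPCOM+} triangulation search, the paper degenerates the general quadric to a specific \emph{monomial} (namely $uv$ for $V_{12,2,9}$, and the product of two opposite cube vertices for $V_{12,3}$), and then takes an initial ideal with respect to a tailored term order. The resulting initial ideal is already the Stanley--Reisner ideal of $T_8'*\Delta_0$, so no intermediate toric variety and no polytope-triangulation lookup is needed. Your approach via a binomial quadric and the Kreuzer--Skarke list would also succeed, but the paper's route avoids the database search entirely.

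For $V_{12}$ the paper's argument is genuinely different and considerably slicker than degenerating $OG(5,10)$. (Incidentally, $V_{12}$ is cut in $OG(5,10)$ by seven hyperplanes only---no quadric.) The paper exhibits a single toric Fano threefold (number 127896 in \cite{grdb}) whose moment polytope admits regular unimodular triangulations of \emph{both} types $T_8*\Delta_0$ and $T_8'*\Delta_0$. Hence $[\PP(T_8*\Delta_0)]$ and $[\PP(T_8'*\Delta_0)]$ lie on a common irreducible component of $\hilb_{12}$. But by Theorem~\ref{thm:nice}, $[\PP(T_8*\Delta_0)]$ is a smooth point of $\hilb_{12}$ lying on the component $B_{98}$ of $V_{12}$; therefore $[\PP(T_8'*\Delta_0)]$ lies on $B_{98}$ as well, and $V_{12}$ degenerates to it. This Hilbert-scheme transfer avoids any need to torically degenerate $OG(5,10)$ or to specialize its linear sections, which in your plan is the most delicate step (specializing seven hyperplanes so that the result is both toric and admits the right triangulation is not automatic).
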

\begin{proof} 
	First of all, note that the polytope dual to number 127896 from \cite{grdb} has  regular unimodular triangulations to both $T_8*\Delta_0$ and $T_8'*\Delta_0$. Thus, the corresponding toric variety degenerates to both  $\PP(T_8*\Delta_0)$ and $\PP(T_8'*\Delta_0)$. But since $[\PP(T_8*\Delta_0)]$ is a smooth point of the Hilbert scheme,  $[\PP(T_8*\Delta_0)]$ and $[\PP(T_8'*\Delta_0)]$ must lie on the same component, and since $V_{12}$ degenerates to $\PP(T_8*\Delta_0)$, it also degenerates to $\PP(T_8'*\Delta_0)$.

	For the remaining degenerations, we use the equations from Proposition \ref{prop:caneq} and \S\ref{sec:exotic}.
To degenerate from $V_{12,2,9}$, we degenerate
the quadric to $uv$ and then choose an elimination term order for
$u,v,w$. The resulting initial ideal is the Stanley-Reisner ideal for
$T_8'*\Delta_0$. 

Finally, consider the equations for $V_{12,3}$. The variables $x_{ijk}$ correspond to
the vertices $(i,j,k)$ of a cube in $\QQ^3$. The equations of
Proposition \ref{prop:caneq} correspond to the affine relations
between these lattice points, see figure \ref{fig:cube}(a). 
The first
six equations correspond to intersecting diagonals on the six faces of
the cube and  the last set of equations corresponds to the four diagonals
intersecting in the middle of the cube. We choose any term order
which, for the first six equations, selects monomials corresponding to
diagonals which form two non-intersecting triangles, see figure
\ref{fig:cube}(b). Degenerating the quadric to the product of the two
vertices not lying on these triangles and taking the initial ideal
gives the desired degeneration.
\end{proof}

\begin{figure}
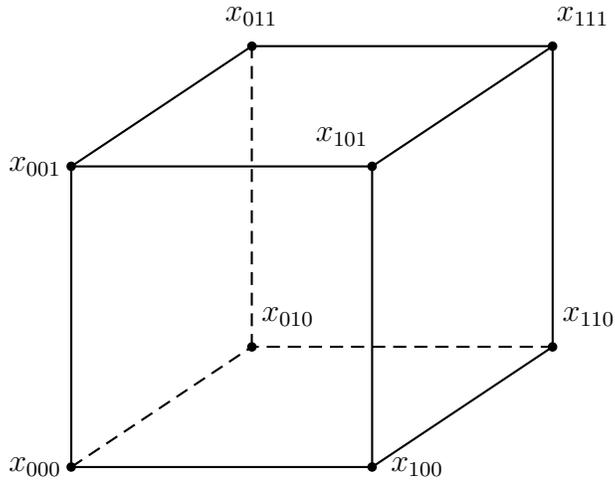
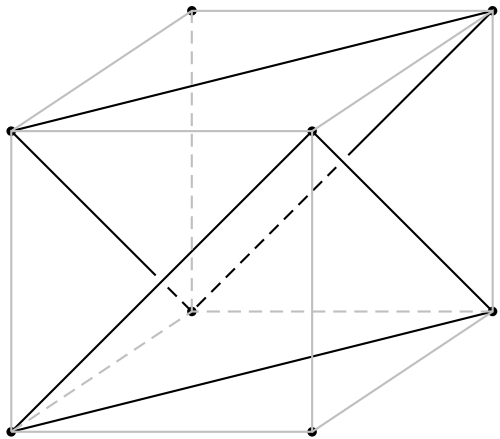
 \subfigure[Monomials on the cube]{\cubeone}
\subfigure[A choice of term order]{\cubetwo}
	\caption{Relations coming from a cube}\label{fig:cube}
\end{figure}

\section{Toric Fano Threefolds of Degree Ten}\label{sec:degten}
We now determine the position of toric Fano threefolds of degree $10$ with at most Gorenstein singularities in $\hilb_{10}$.
Let $\tftv$ denote the set of all  toric Fano
threefolds of degree ten with at most Gorenstein singularities. There are exactly 
54 of these, see
\cite{grdb} and \cite{kreuzer}. 
\begin{thm}\label{thm:re10}
	Consider $X\in\tftv$. Then the point $[X]$ lies exactly on the components of $\hilb_{10}$ as recorded in Table \ref{table:toricten}, where we refer to $X$ by its number in \cite{grdb}.
In particular, $X$ always admits an embedded smoothing to a smooth Fano threefold.
\end{thm}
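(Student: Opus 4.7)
The plan is to combine the Stanley-Reisner degeneration technique with the tangent-cone strategy of Section \ref{sec:def2}. The crucial observation is that by Theorem \ref{thm:nice}, the point $[\PP(T_7*\Delta_0)]\in\hilb_{10}$ is a smooth point lying on $B_{85}$, so for any $X\in\tftv$ that degenerates to $\PP(T_7*\Delta_0)$ the degeneration curve lies entirely in $B_{85}$ and hence $[X]\in B_{85}$ automatically. This reduces the problem to (a) locating the few $X$ that do not admit such a degeneration, and (b) deciding, for each $X$, which additional components of $\hilb_{10}$ pass through $[X]$.

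First, I would use {\small\verb+TOPCOM+} to search each of the 54 moment polytopes from \cite{grdb} for a regular unimodular triangulation of the form $T_7*\Delta_0$. This immediately places most varieties on $B_{85}$. For any $X$ lacking such a triangulation, I would search for an alternative Stanley-Reisner pivot whose Hilbert point has been located on a known component, for instance one refining the scroll-plus-cubics models of Section \ref{sec:exotic} or the presentation of $V_{10}'$ from Proposition \ref{prop:caneq}; the unique case expected to lie on $B_{84}$ alone (namely number 437961) should be handled this way.

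Second, to determine whether $[X]$ lies on components other than $B_{85}$, I would apply the five-step procedure of Section \ref{sec:def2} at $[X]$ itself. Using the comparison theorems of Section \ref{sec:def1} I can compute $(T^1_A)_0$, $(T^1_{A/S})_0$ and $(T^2_A)_0$ in {\small\verb+Macaulay2+}, then use {\small\verb+Versal Deformations+} to extract the lowest-order obstruction equations, cutting out a scheme $Z = Z_1\cup\cdots\cup Z_k \supset \TC_{[X]}\hilb_{10}$. For each $Z_i$ other than the one corresponding to $B_{85}$, I would pick a tangent vector $v\in Z_i\setminus\bigcup_{j\neq i}Z_j$, lift it to a one-parameter family $\X\to\Aff^1$, and try to identify the general fibre with a known model: a smooth $V_{10}'$ for $B_{84}$, or one of the Cheltsov trigonal threefolds $T_3$, $T_9$ of Section \ref{sec:exotic} for $B_{88}^\dagger$, $B_{84}^\dagger$ respectively. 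Matching $\dim Z_i$ with $h^0$ of the normal sheaf at the fibre, and checking that $T^2$ there vanishes, pins down the correspondence $Z_i\leftrightarrow B$ as in step (iv) of Section \ref{sec:def2}.

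The main obstacle will be computational and technical: the cotangent calculations and primary decompositions at the more intricate toric points can run into memory limits, and some $Z_i$ may appear as embedded primes of the lowest-order obstruction ideal rather than as genuine tangent-cone components. Where this happens, the deformation arguments of step (v) of Section \ref{sec:def2}, in particular degenerating a candidate fibre $X'$ to a second Stanley-Reisner pivot and comparing tangent cones there, must be invoked to rule out spurious components. A secondary subtlety, flagged in Section \ref{sec:def2}, is that even when $X'$ is a smooth point of $\hilb_{10}$ with $h^0(\N_{X'/\PP^7})=\dim Z_i$, the obstruction space $T^2_{X'/\PP^7}$ need not vanish; in those cases I would fall back on the rolling-factor structure of Section \ref{sec:rolling} to describe the deformations by hand, mirroring the treatment of the non-smoothing components $B_{84}^\dagger$ and $B_{88}^\dagger$ in Section \ref{sec:exotic}.
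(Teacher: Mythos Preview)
Your proposal is correct and follows essentially the same approach as the paper's proof: first use {\small\verb+TOPCOM+} to find $T_7*\Delta_0$ triangulations (which handles all but twelve of the 54 varieties, placing them as smooth points on $B_{85}$), then apply the tangent-cone strategy of Section~\ref{sec:def2} to the exceptional cases, using rolling-factors descriptions to identify the components $B_{84}$, $B_{84}^\dagger$, $B_{88}^\dagger$. One small deviation: your suggestion to hunt for an ``alternative Stanley-Reisner pivot'' for the exceptional cases is not needed and is not what the paper does---for number 437961 the paper simply observes that $T_X^2=0$ and that $X$ already sits on a $(1,1,1,1)$ scroll, so $[X]$ is a smooth point on $B_{84}$ directly; for the remaining eleven cases the tangent-cone analysis at $[X]$ itself suffices. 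Also, the embedded-component obstacle you anticipate does not in fact arise in degree ten (it only shows up in degree twelve), so that part of your plan is unnecessary caution here.
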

\begin{proof}
	The theorem is proved using case by case computer computation.
	First we use {\small\verb+TOPCOM+} to check whether the moment polytope of $X$ has a regular unimodular triangulation of the form $T_7*\Delta_0$; this is true exactly for those $X\in\tftv$ not listed in Table \ref{table:toricten}. These $X$ are therefore unobstructed and $[X]$ is a smooth point on $B_{85}$, the component corresponding to $V_{10}$.

	This leaves twelve exceptional cases. In the remainder of this section, we study each of these cases individually by applying the general strategy outlined in \S\ref{sec:def2}. 

\numex{Number 275510}{We describe the five steps of \S\ref{sec:def2} explicitly for $X$ being the toric variety number 275510.
$X$ sits on a scroll of type $(2,2,0,0)$ and is cut out by the $2\times 2$ minors of
\begin{align*}
	M=\left(\begin{array}{c c c c}
	x_0&x_1&y_0&y_1\\
	x_1&x_2&y_1&y_2
	\end{array}\right)
\end{align*}
together with $f_0=x_0^2x_2-y_0z_1z_2$ and the two other cubics $f_1$ and $f_2$ obtained by rolling factors, see the example in \S\ref{sec:rolling}.
The dimension of $T_X^1$ is $27$, and the dimension of $T_X^2$ equals $4$. The space $T_X^1$ can be decomposed into the direct sum of a $24$-dimensional space $T_{\mathrm{roll}}^1$ consisting of perturbations of the cubics $f_i$, a two-dimensional space $T_{\mathrm{scroll}}^1$ generated by the perturbations 
\begin{align*}
	\left(\begin{array}{c c c c}
	x_0&x_1-t_1z_1-t_2z_2&y_0&y_1\\
	x_1&x_2&y_1&y_2
	\end{array}\right)
\end{align*}
of $M$, and a one-dimensional space generated by the non-scrollar perturbation
\begin{align*}
x_0x_2-x_1^2-t_0z_1z_2\\
x_0y_1-x_1y_0\\
x_0y_2-x_1y_1\\
x_1y_1-x_2y_0+t_0x_0x_2\\
x_1y_2-x_2y_1+t_0x_1x_2\\
y_0y_2-y_1y_1
\end{align*}
of the minors of $M$, keeping the $f_i$ constant.

Consider the perturbations of the $f_i$ induced via rolling factors by the perturbation
$$
f_1-s_1x_0z_1^2-s_2x_0z_2^2
$$
of $f_1$. This may be extended to a basis of $T_{\mathrm{roll}}^1$ such that the obstruction equations are 
\begin{align*}
{t}_{1} s_1,\qquad
      {t}_{2} s_2,\qquad
      {t}_{0} {s}_{1},\qquad
      {t}_{0} {s}_{2}.
\end{align*}
This decomposes into the four components $Z_1=V(s_1,s_2)$, $Z_2=V(t_0,t_1,t_2)$, $Z_3=V(t_0,t_1,s_2)$, and $Z_4=V(t_0,s_1,t_2)$. Since the tangent space dimension $h^0(\N_{X/\PP^7})=87$, these cut out schemes of respective dimensions $85$, $84$, $84$, and $84$ in the tangent space of the local Hilbert scheme.

To see that $Z_1$ is a component of the tangent cone of $\hilb_{10}$ at $[X]$, consider the one-parameter deformation $\X\to\Aff^1$ given by the parameter $t_0$. It is straightforward to check that this lifts to higher order with no further perturbations. By construction, the tangent direction of this deformation lies only in the component $Z_1$. 
For $t_0\neq 0$, the resulting ideal is generated by six quadrics, and is easily seen to have a Gr\"obner degeneration to the ideal of $\PP(T_7*\Delta_0)$.  Thus, $Z_1$ must be an $85$-dimensional component of the tangent cone of $\hilb_{10}$ corresponding to the component $B_{85}$.

To see that $Z_2$ is also a component of the tangent cone of $\hilb_{10}$ at $[X]$, 
we may consider a general linear perturbation of $f_1$ (subject to the condition that its factors may be rolled). This defines a general element of the component $B_{84}^\dagger$, since the perturbation still lies on a scroll of type $(2,2,0,0)$.

Finally, we see that $Z_3$ and $Z_4$ are also components of the tangent cone of $\hilb_{10}$ at $[X]$, both corresponding to $B_{84}$. Consider for example the perturbation
\begin{align*}
	\left(\begin{array}{c c c c}
	x_0&x_1-tz_1&y_0&y_1\\
	x_1&x_2&y_1-t^2z_2&y_2
	\end{array}\right)
\end{align*}
along with $f_1-tx_0z_2^2$ and the corresponding rolling factors perturbations. The tangent direction of this perturbation is only contained in $Z_3$; for $t\neq 0$ the fiber is contained in a scroll of type $(1,1,1,1)$ and thus can lie only on the component $B_{84}$. For the case of $Z_4$, a similar perturbation can be made after interchanging $z_1$ and $z_2$.
}

We now provide brief sketches of the remaining cases.

\numex{Number 437961}{$X$ lies in a scroll of type $(1,1,1,1)$ and is cut out by $f_0=x_0y_0z_0-w_0^2w_1$ and the cubics $f_1$ and $f_2$ obtained by rolling factors twice. Thus, $V_{10}'$ degenerates to $X$, so $[X]$ lies on $B_{84}$. A calculation shows that $T_X^2=0$, so $[X]$ is a smooth point of $\hilb_{10}$.
}

\numex{Numbers 86711, 98325, 433633, 439399}{The tangent cone at $X$ has two components, of dimensions $84$ and $85$, cut out by the lowest order terms of the obstruction equations. We can deform onto the $85$-dimensional component, and then degenerate to $\PP(T_7*\Delta_0)$, showing that the $85$-dimensional component is the smoothing component $B_{85}$. 
On the other hand, inspection of the equations of $X$ shows that it is a divisor of type $3D-2F$ on a scroll of type $(2,1,1,0)$. There is an obvious scrollar deformation to a divisor of type $3D-2F$ on a scroll of type $(1,1,1,1)$, which smoothes to $V_{10}'$. Hence, $[X]$ must lie on $B_{84}$.}

\numex{Numbers	522075, 523456, 547399}{Obstruction equations predict that the tangent cone at $X$ has two components, of dimensions $88$ and $85$. We can deform onto the $85$-dimensional component, and then degenerate to $\PP(T_7*\Delta_0)$, showing that the $85$-dimensional component is the smoothing component $B_{85}$. On the other hand, the variety $X$ is cut out by equations of the type for members of $B_{88}^\dagger$ except with degenerate quadrics $g_0,g_1,g_2$. We may conclude that $[X]$ lies on $B_{88}^\dagger$.}

\numex{Numbers 283519, 521212, 522702}{These cases are completely analogous to number 275510 above: all lie on scrolls of type $(2,2,0,0)$. Obstruction equations predict that the tangent cone at $X$ has four components, of dimensions $85$, $84$, $84$, and $84$. Deforming onto the first of these components (with a non-scrollar deformation), we can degenerate to $\PP(T_7*\Delta_0)$, showing that $[X]$ lies on $B_{85}$.  The second component consists only of pure rolling factor deformations, thus corresponding to $B_{84}^\dagger$. The third and fourth components both involve scrollar deformations to a scroll of type $(1,1,1,1)$ and both correspond to $B_{84}$.}
\end{proof}

\section{The Hilbert Scheme $\hilb_{12}$ at $[\PP(T_8'*\Delta_0)]$}\label{sec:bipyramid}
In this section we will study the local structure of $\hilb_{12}$ at the point $[\PP(T_8'*\Delta_0)]$.
We will use this in \S\ref{sec:toric} to help locate the elements of $\tft_{12}$ in $\hilb_{12}$.

Note that $T_8'$ is the join of the boundary $\partial \Delta_1$ of a one-simplex (i.e. two points) with the boundary of a hexagon.
Let $X_\bp=\PP(T_8'*\Delta_0)$.\footnote{The subscript $\bp$ refers to the fact that $T_8'$ is the {\bf b}i{\bf p}yramid over a hexagon.}
We identify the vertices of the hexagon with variables $x_1,\ldots,x_6$ ordered cyclically, the vertices of $\partial \Delta_1$ with variables $y_1,y_2$, and the vertex of $\Delta_0$ with the variable $y_0$. Then $X_\bp$ is cut out by the quadrics $x_{i-1}x_{i+1}$ for $i=1,\ldots,6$  $x_ix_{i+3}$ for $i=1,2,3$, and $y_1y_2$, where all indices are taken modulo six.

We now describe the space $T_{X_\bp}^1$ of first-order deformations of $X_\bp$. Consider the $24$ deformation parameters
$s_i$ and $t_{i,j}$ for $1\leq i \leq 6$ and $j=0,1,2$. We have further $19$ deformation parameters
$a_i,b_i,c_j$ for $1\leq i \leq 6$ and $0\leq j \leq 6$. These $33$ parameters will give us a basis of $T_{X_\bp}^1$. 

To consolidate the presentation, we will write down perturbations of our equations which already include higher order perturbations, since we shall be considering families over the versal base space components. Let $p(z)$ be a power series solution of the functional equation 
$$
zp(z)^4=p(z)+1
$$
and set $f=p(s_1\cdots s_6)$, $e=f/(f+2)$. For $i=1,\ldots,6$, set $t_i=\sum_{j=0}^2t_{i,j}y_j$. We consider the perturbations
\begin{multline}\label{eq:p1}
	x_{i-1}x_{i+1}+(t_i+s_ix_i)x_i\\
+ s_{i+3}(e^2t_{i-2}t_{i+2}+efs_{i+2}t_{i-2}x_{i+2} +
eft_{i+2}s_{i-2}x_{i-2})\\
-s_{i-2}s_{i+2}(et_{i+3}+fs_{i+3}x_{i+3})^2 \\
+ e^2f^2s_{i-2}s_{i-1}s_{i+1}s_{i+2}s_{i+3}t_{i}^2
\end{multline} for $i = 1, \ldots ,6$,
\begin{multline}\label{eq:p2}
	x_ix_{i+3} + et_{i+1}t_{i+2} +
et_{i+2}s_{i+1}x_{i+1} + et_{i+1}s_{i+2}x_{i+2} +
fs_{i+1}s_{i+2}x_{i+1}x_{i+2}\\
 + et_{i-2}s_{i-1}x_{i-1} + et_{i-1}s_{i-2}x_{i-2} +
fs_{i-1}s_{i-2}x_{i-1}x_{i-2}\\
 - e^2f^2s_{i-2}s_{i-1}s_{i+1}s_{i+2}t_it_{i+3}
\end{multline}
for $i=1,2,3$,
and 
\begin{align}\label{eq:p3}
	y_1y_2+c_0y_0^2+\sum_{i=1}^6 (a_ix_i+b_ix_{i+1}+c_iy_0)x_i.
\end{align}
As it stands, the family defined by these perturbations is only flat if considered up to first order. However, we shall see that it becomes flat if we restrict to two of the base space components.
A calculation with {\small\verb+Macaulay2+} or using \cite[Theorem 13]{altmann:04a} shows that with respect to these perturbations, the above deformation parameters form a basis for $T^1_{X_\bp}$.

\begin{thm}\label{thm:tc}
	The tangent cone $\TC_{[\PP(T_8'*\Delta_0)]}\hilb_{12}$ is cut out by the fifteen quadrics 
	\begin{align}		t_{i+1,j}t_{i+2,j}-t_{i-1,j}t_{i-2,j}\qquad &i\in\{1,2,3\},\quad j\in\{0,1,2\}\label{eqn:tc1}
\\
		t_{i+1,j}t_{i+2,0}+t_{i+1,0}t_{i+2,j}-t_{i-1,j}t_{i-2,0}-t_{i-1,0}t_{i-2,j}\qquad &i\in\{1,2,3\},\quad j\in\{1,2\}.\label{eqn:tc2}
\end{align}
It decomposes into four irreducible components $Z_{97}$, $Z_{99}$, $Z_{98}^1$, $Z_{98}^2$ of respective dimensions $97$, $99$, $98$, and $98$. $Z_{97}$ is cut out by the $2\times 2$ minors of
\begin{align*}	\left(\begin{array}{c c c c c c}
		t_{1,0} & t_{1,1} & t_{1,2} & t_{4,0} &t_{4,1}& t_{4,2}\\
		t_{3,0} & t_{3,1} & t_{3,2} & t_{6,0} &t_{6,1}& t_{6,2}\\
		t_{5,0} & t_{5,1} & t_{5,2} & t_{2,0} &t_{2,1}& t_{2,2}
	\end{array}\right)	
\end{align*}
and corresponds to the component $B_{97}$.
$Z_{99}$ is cut out by the $2\times 2$ minors of
\begin{align*}
	\left(\begin{array}{c c c c c c c c c}
		t_{1,0} & t_{1,1} & t_{1,2} &	t_{3,0} & t_{3,1} & t_{3,2}&
		t_{5,0} & t_{5,1} & t_{5,2}\\
		 t_{4,0} &t_{4,1}& t_{4,2}&
		 t_{6,0} &t_{6,1}& t_{6,2}
		& t_{2,0} &t_{2,1}& t_{2,2}
	\end{array}\right)
\end{align*}
and corresponds to the component $B_{99}$. Finally, both components $Z_{98}^1$, $Z_{98}^2$ correspond to $B_{98}$ and for $k,l=1,2$ with $k\neq l$, $Z_{98}^k$ is cut out by the thirty quadrics
\begin{align*}
	t_{i+1,j}t_{i+2,j}-t_{i-1,j}t_{i-2,j}\qquad & i\in\{1,2,3\},\quad j\in\{0,1,2\}\\
	t_{i+1,k}t_{i-1,0}-t_{i+1,0}t_{i-1,k}\qquad & i\in\{1,2,3,4,5,6\}\\
	t_{i+1,k}t_{i+2,0}-t_{i-1,k}t_{i-2,0}\qquad & i\in\{1,2,3,4,5,6\}\\
	t_{i,l}t_{i+1,0}-t_{i+3,l}t_{i+1,0}\qquad & i\in\{1,2,3\}\\
	t_{i,l}t_{i-1,0}-t_{i+3,l}t_{i-1,0}\qquad & i\in\{1,2,3\}\\
	t_{i,l}t_{i+3,0}-t_{i,0}t_{i+3,l}\qquad & i\in\{1,2,3\}.
\end{align*}
\end{thm}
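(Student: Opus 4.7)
The plan is to follow the five-step strategy of Section \ref{sec:def2}, using the comparison theorems of Section \ref{sec:def1} to reduce all calculations to the Stanley--Reisner ring. First I would compute the lowest-order obstruction equations for $X_\bp$ with respect to the $33$-parameter basis of $T^1_{X_\bp}$ using the {\small\verb+Versal Deformations+} package. The shape of the explicit perturbations (\ref{eq:p1})--(\ref{eq:p3}) already indicates that the $s_i$, $a_i$, $b_i$, and $c_j$ parameters are unobstructed: the $a_i,b_i,c_j$ appear only in the deformed equation for $y_1y_2$, and the $s_i$ enter (\ref{eq:p1})--(\ref{eq:p2}) in a way that can be absorbed by coordinate change. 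Hence all obstructions should lie in the polynomial ring on the $t_{i,j}$, and a direct {\small\verb+Macaulay2+} computation should confirm that the initial obstruction ideal is generated exactly by the fifteen quadrics (\ref{eqn:tc1})--(\ref{eqn:tc2}).

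Next I would run primary decomposition on this ideal. The $2\times 2$ minor descriptions arise naturally: grouping the nine row-vectors $(t_{i,0},t_{i,1},t_{i,2})$ into a $3\times 6$ matrix by the two natural parities on the hexagon gives $Z_{97}$ and $Z_{99}$ as the corresponding rank-one loci, while $Z_{98}^1$ and $Z_{98}^2$ appear as the two non-obvious components distinguished by which of the second/third $t$-coordinates is singled out. The dimensions $97,99,98,98$ can be read off directly. At this stage one has to check that the ideal has no embedded or extraneous primary components that fail to correspond to honest components of $\TC_{[X_\bp]}\hilb_{12}$; this is resolved in the next step by exhibiting deformations realizing each $Z_i$.

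For each of the four candidate components I would pick a general tangent vector $v \in Z_i$ avoiding the other $Z_j$ and lift it to a one-parameter deformation. The power series $p(z)$ appearing in (\ref{eq:p1})--(\ref{eq:p2}) is engineered so that on $Z_{99}$ (and symmetrically on $Z_{97}$) the relations $zp(z)^4 = p(z)+1$ collapse to identities, giving a polynomial flat family whose generic fiber sits in $\PP^8$. I would then identify the smoothing component through the degenerations of Proposition \ref{prop:srdegen}: the generic fiber on $Z_{99}$ should deform to $V_{12,2,9}$, forcing $Z_{99}$ to correspond to $B_{99}$; the generic fiber on $Z_{97}$ should deform to $V_{12,3}$, giving $B_{97}$; and the generic fibers on $Z_{98}^1$ and $Z_{98}^2$ should deform to $V_{12}$, giving $B_{98}$. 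A tangent-space count via Proposition \ref{prop:h0N} (yielding $96,97,98,99$) matches the dimensions of the $Z_i$, which by the argument in step (iv) of Section \ref{sec:def2} forces the $Z_i$ to be honest components of the tangent cone.

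The main obstacle will be handling the pair $Z_{98}^1, Z_{98}^2$: they have equal dimensions and both correspond to $B_{98}$, so the challenge is to produce two genuinely distinct liftable first-order deformations, one in each $Z_{98}^k$, and then to verify both smooth to $V_{12}$. The asymmetry between the indices $j=k$ and $j=l$ in the defining equations is precisely what allows two inequivalent specializations of the parameters in (\ref{eq:p1})--(\ref{eq:p3}); checking that each lift is flat polynomially and that the generic fiber has $h^0(\N)=98$ with vanishing (or controllable) $T^2$ will be the most delicate computational step. A secondary concern is that the primary decomposition could a priori contain embedded components, but these must be contained in one of the four $Z_i$ already identified and so can be absorbed into the analysis above.
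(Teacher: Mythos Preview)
Your approach follows the full five-step strategy of Section~\ref{sec:def2} and would in principle succeed, but the paper's proof is considerably shorter because it bypasses steps (iii)--(v) almost entirely. The paper computes the fifteen quadrics and their primary decomposition exactly as you propose, obtaining $Z = Z_{97} \cup Z_{99} \cup Z_{98}^1 \cup Z_{98}^2$ as an upper bound for the tangent cone. But then, rather than lifting a tangent vector on each $Z_i$ and analyzing the resulting fiber, it invokes Proposition~\ref{prop:srdegen} directly: since $V_{12}$, $V_{12,2,9}$, and $V_{12,3}$ all degenerate to $X_\bp$, the point $[X_\bp]$ already lies on $B_{97}$, $B_{98}$, and $B_{99}$, so $\TC_{[X_\bp]}\hilb_{12}$ must contain components of dimensions $97$, $98$, and $99$. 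Dimension comparison then forces these to coincide with $Z_{97}$, $Z_{99}$, and one of the $Z_{98}^k$. For the remaining $Z_{98}^l$, the paper observes that the involution $y_1 \leftrightarrow y_2$ (equivalently $j=1 \leftrightarrow j=2$) is an automorphism of $X_\bp$ that swaps $Z_{98}^1$ and $Z_{98}^2$; since the tangent cone is invariant under this $\ZZ_2$ symmetry, both must be components and both correspond to $B_{98}$. This symmetry argument replaces precisely the step you flag as ``most delicate'', and no one-parameter liftings or $h^0(\N)$ checks on deformed fibers are needed.

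One small correction: your remark that on $Z_{99}$ and $Z_{97}$ the functional equation for $p(z)$ ``collapses to identities, giving a polynomial flat family'' is not quite right. The series $p(z)$ remains a genuine power series; what is true (and is the content of the subsequent Theorem~\ref{thm:uf}) is that the family (\ref{eq:p1})--(\ref{eq:p3}) becomes \emph{flat} when restricted to $Z_{97}$ or $Z_{99}$. That flatness, however, plays no role in the proof of Theorem~\ref{thm:tc} itself; only the first-order part of the family is used there, via the obstruction calculus.
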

\begin{proof}
	Using {\small\verb+Versal Deformations+}, we can calculate that the lowest order terms of the obstruction equations are exactly the quadrics in \eqref{eqn:tc1} and \eqref{eqn:tc2}. The tangent cone is certainly contained in the subscheme $Z$ cut out by these equations. Using primary decomposition in {\small\verb+Macaulay2+}, we see that $Z$ decomposes into the four components $Z_{97}$, $Z_{99}$, $Z_{98}^1$, and $Z_{98}^2$, which have the stated dimension.

	Now, by Proposition \ref{prop:srdegen}, we know that $[\PP(\T_8'*\Delta_0)]$ lies on $B_{97}$, $B_{98}$, and $B_{99}$. Thus, the tangent cone at this point must have components of dimensions $97$, $98$, and $99$. $Z_{98}^1$, and $Z_{98}^2$ are indistinguishable modulo a $\ZZ_2$ symmetry, so we can conclude that the lowest order terms of the obstruction equations actually cut out the tangent cone.
\end{proof}
If we ignore the component $B_{98}$, we can even say more about the local structure of $\hilb_{12}$ at
the point $[\PP(T_8'*\Delta_0)]$.
\begin{thm}\label{thm:uf}
	In a  formally local neighborhood of $[\PP(T_8'*\Delta)]\in\hilb_{12}$, the components $B_{97}$ and $B_{99}$ are respectively cut out by the equations for $Z_{97}$ and $Z_{99}$. Over these components, a universal family $\mathcal{U}$ is given by the perturbations \eqref{eq:p1}, \eqref{eq:p2}, and \eqref{eq:p3} after adding linear changes of coordinates to account for trivial deformations.
\end{thm}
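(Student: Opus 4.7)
The plan is to verify, by explicit computation in \texttt{Macaulay2}, that the family defined by the perturbations \eqref{eq:p1}, \eqref{eq:p2}, \eqref{eq:p3} becomes flat once the base parameters are restricted to either of the ideals $I_{Z_{97}}$ or $I_{Z_{99}}$ produced by Theorem \ref{thm:tc}, and then to identify the resulting families with the local universal families on $B_{97}$ and $B_{99}$. The perturbations are already written incorporating the auxiliary power series $f = p(s_1\cdots s_6)$ and $e = f/(f+2)$ with $z p(z)^4 = p(z)+1$; flatness modulo $I_{Z_{97}}$ (respectively $I_{Z_{99}}$) reduces to a finite collection of polynomial identities, namely that every syzygy among the Stanley--Reisner generators of $X_{\bp}$ lifts to a syzygy of the perturbed generators without producing new obstruction terms. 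The functional equation for $p$ is precisely the identity that enforces these cancellations, and each check can be performed after truncating at sufficiently high but finite order.

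Once flatness is in hand, I would match the constructed families with the components $B_{97}$ and $B_{99}$ using the tangent cone description from Theorem \ref{thm:tc}. The first-order parts of the perturbations span all of $T^1_{X_\bp}$, and after restriction to $V(I_{Z_{97}})$ (respectively $V(I_{Z_{99}})$) the base has dimension $97$ (respectively $99$). Since Theorem \ref{thm:tc} already forces the tangent cone of $\hilb_{12}$ at $[X_\bp]$ to split with exactly these ideals, a flat family of matching dimension whose tangent image is $Z_{97}$ or $Z_{99}$ must surject onto a formal neighborhood of $[X_\bp]$ in $B_{97}$ or $B_{99}$. A dimension count then upgrades the surjection to a formal isomorphism of germs, which gives the claimed scheme-theoretic description of the two components.

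For the universality claim I would pass from $\Def_{X_\bp}$ to the embedded deformation functor $H_{X_\bp}$ using the smoothness of the forgetful map $H_{X_\bp}\to \Def_{X_\bp}$ established in Section \ref{sec:def1}. Concretely, the linear automorphisms of $\PP^8$ provide a complement to the image of $T^1_{X_\bp}$ in $T^1_{X_\bp/\PP^8}$; adjoining the corresponding trivial first-order perturbations to the ansatz and lifting them to all orders (again by the smoothness of the forgetful map) extends the flat family to a smooth fibration over $V(I_{Z_{97}}) \cup V(I_{Z_{99}})$. The Kodaira--Spencer map of the extended family is an isomorphism onto the Zariski tangent space of $B_{97} \cup B_{99}$ at $[X_\bp]$, so the classifying morphism to $\hilb_{12}$ is a formal isomorphism onto the formal neighborhood of $[X_\bp]$ in these components, identifying the extended family with the restriction of the universal family.

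The main obstacle is the flatness verification beyond first order. There is no a priori reason why a single closed-form ansatz involving one auxiliary power series should resolve all higher-order obstructions simultaneously on both strata; the construction of $p(z)$ and its functional equation was guided by an explicit lifting, and confirming that these choices suffice is the real computational content of the theorem. In practice this means truncating the perturbations, reducing modulo the appropriate ideal, and checking flatness order by order, relying on the functional equation to stabilise the truncated patterns and on the determinantal structure of $Z_{97}$ and $Z_{99}$ to keep the syzygy computations tractable.
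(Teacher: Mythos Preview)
Your overall architecture is right, and the second half of your argument (matching the flat families with $B_{97}$ and $B_{99}$ via the tangent cone, and then passing to the embedded functor using smoothness of $H_{X_\bp}\to\Def_{X_\bp}$) is essentially what the paper does, though more briefly: once one has a flat family over $Z_{97}$ (respectively $Z_{99}$) whose first-order part spans $T^1_{X_\bp}$, the identification with the formal germ of $B_{97}$ (respectively $B_{99}$) is immediate.

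Where you diverge from the paper is in the flatness step, and this is also where your proposal is weakest. The paper does \emph{not} verify flatness by truncated Macaulay2 computation. Instead it observes that the perturbations \eqref{eq:p1} and \eqref{eq:p2} are exactly the family constructed in \cite[Proposition~6.6]{altmann:09a}, which is proven there to be flat once the $2\times 2$ minors of
\[
\begin{pmatrix} t_1 & t_3 & t_5 \\ t_4 & t_6 & t_2 \end{pmatrix}
\]
vanish (here $t_i=\sum_j t_{i,j}y_j$). The determinantal ideals $I_{Z_{97}}$ and $I_{Z_{99}}$ are simply the two ways of forcing these minors to vanish identically in the $y_j$. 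Adjoining the single extra quadric \eqref{eq:p3} then introduces only Koszul relations, which lift for free. So the flatness is established conceptually and to all orders, with no truncation.

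Your proposed route---truncate, check syzygies in Macaulay2, and appeal to the functional equation $zp(z)^4=p(z)+1$ to argue the pattern persists---is not wrong in spirit, but as written it is not a proof: a finite-order check cannot by itself certify flatness over a base involving the genuine power series $f=p(s_1\cdots s_6)$. You would either need to exhibit the lifted syzygies in closed form (in terms of $e,f$) and verify them symbolically using the functional equation, or invoke the structural result the paper cites. The latter is what makes the paper's proof short; your version would have to carry out the former explicitly, which is feasible but is real work you have not done here.
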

\begin{proof}
	We claim that the family defined by \eqref{eq:p1}, \eqref{eq:p2}, and \eqref{eq:p3} is flat if we impose the equations for either $Z_{97}$ or $Z_{99}$. Indeed, by \cite[Proposition 6.6]{altmann:09a}, the family defined by  \eqref{eq:p1} and \eqref{eq:p2} is flat if we require the vanishing of the $2\times 2$ minors of 
	\begin{align*}
		\left(\begin{array}{c c c}
t_1&t_3&t_5\\
t_4&t_6&t_2
	\end{array}\right).
\end{align*}
The equations cutting out $Z_{97}$ and $Z_{99}$ are two different ways of satisfying this condition.
When we add the equation \eqref{eq:p3}, the additional relations are simply Koszul relations and can be lifted trivially.

Since this family spans the vector space of first order deformations, the statement of the theorem follows.
\end{proof}
\section{Toric Fano Threefolds of Degree Twelve}\label{sec:toric}
We now determine the position of toric Fano threefolds of degree $12$ with at most Gorenstein singularities in $\hilb_{12}$.
Let $\tftw$ be the set of all Gorenstein toric Fano
threefolds of degree twelve. There are exactly 135 of these, see
\cite{grdb} and \cite{kreuzer}.
\begin{thm}\label{thm:re12}
	Consider $X\in\tftw$. Then the point $[X]$ lies exactly on the components of $\hilb_{12}$ as recorded in Table \ref{table:torictwelve}, where we refer to $X$ by its number in \cite{grdb}.
	In particular, $X$ always admits an embedded smoothing to a smooth Fano threefold. Furthermore, each component $B$ of $\hilb_{12}$ is smooth at $[X]$, unless $[X]$ lies on $B_{97}$, $B_{98}$ and $B_{99}$ (and possibly $B_{99}^\dagger$) and $B=B_{98}$, or if $X$ is number 544886 and $B=B_{96}$.
\end{thm}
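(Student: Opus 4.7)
The plan is to follow the strategy used in the proof of Theorem \ref{thm:re10}, combining Stanley--Reisner degenerations with the general obstruction-theoretic procedure of Section \ref{sec:def2}, now exploiting the detailed local description of $\hilb_{12}$ at $[\PP(T_8'*\Delta_0)]$ provided by Theorems \ref{thm:tc} and \ref{thm:uf}. For each of the $135$ varieties $X\in\tftw$, I would first use \verb+TOPCOM+ to search for a regular unimodular triangulation of the moment polytope of $X$ of the form $T_8'*\Delta_0$, or, failing that, $T_8*\Delta_0$. In the latter case $[X]$ is automatically a smooth point on $B_{98}$ by Proposition \ref{prop:srdegen} together with Theorem \ref{thm:nice}. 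In the former case the toric degeneration yields a distinguished tangent vector in $T^1_{X_\bp}$, and by Theorem \ref{thm:tc} the four components $Z_{97}$, $Z_{99}$, $Z_{98}^1$, $Z_{98}^2$ of the tangent cone are cut out by explicit rank conditions on the matrices of $t_{i,j}$-parameters; checking which of these contain the tangent vector identifies exactly which of $B_{97}$, $B_{98}$, $B_{99}$ contain $[X]$.

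For the remaining $X\in\tftw$ I would apply the five-step procedure of Section \ref{sec:def2} directly at $[X]$: compute $T^1_X$ and the lowest-order obstruction terms via the comparison theorems of Section \ref{sec:def1}, primary-decompose the resulting scheme $Z\subset T^1_X$ in \verb+Macaulay2+, and for each component $Z_i$ produce a tangent vector lifting to a one-parameter deformation $\X\to\Aff^1$. For a general fiber $X'$ I would then compute $h^0(\N_{X'/\PP^8})$ and $T^2_{X'/\PP^8}$; matching these against the invariants in Tables \ref{table:fanos} and \ref{table:gorfanos} identifies which component of $\hilb_{12}$ is hit. The non-smoothing component $B_{99}^\dagger$ is recognised as in Section \ref{sec:exotic} by producing a generic fiber that lies on a scroll of type $(4,1,0,0)$ with only pure rolling factor deformations. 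Where $T^2$ does not vanish, I would either verify smoothness using the rolling factors format of Section \ref{sec:rolling}, or perform a further degeneration back to $\PP(T_8*\Delta_0)$ or $\PP(T_8'*\Delta_0)$ to reduce to the known local analysis.

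Two special phenomena require extra care. First, for the many points lying on $B_{97}\cap B_{98}\cap B_{99}$ (possibly together with $B_{99}^\dagger$), smoothness of $B_{98}$ fails precisely because the tangent cone at $[\PP(T_8'*\Delta_0)]$ has the two $98$-dimensional components $Z_{98}^1,Z_{98}^2$ interchanged by a $\ZZ_2$ symmetry; by Theorem \ref{thm:tc} both correspond to $B_{98}$, so $B_{98}$ is locally reducible at $[X]$ while smoothness of $B_{97}$ and $B_{99}$ follows from Theorem \ref{thm:uf}. Second, for the variety $544886$ the primary decomposition of the tangent cone should produce an embedded component alongside the reduced components; as flagged at the end of Section \ref{sec:def2}, I would show separately, using a deformation path from a nearby smooth point of $B_{96}$, that $[X]$ does lie on $B_{96}$ while $B_{96}$ fails to be smooth there.

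The main obstacle will be the varieties marked with an asterisk in Table \ref{table:torictwelve}: at these points the naive obstruction calculation predicts membership in additional non-smoothing or embedded components, and certifying whether $[X]$ really lies on each requires either a successful polynomial lifting of a suitable first-order deformation to an explicit element of a known component, or a secondary degeneration to a Stanley--Reisner scheme whose local Hilbert structure is already understood. The sheer number of cases ($135$) makes careful bookkeeping in \verb+Macaulay2+ indispensable, but each individual case should reduce to a standard application of the machinery already developed in Sections \ref{sec:def1}--\ref{sec:bipyramid}.
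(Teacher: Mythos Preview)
Your overall architecture is right, but the central step for set (ii) has a genuine gap. Knowing that $X$ degenerates to $X_\bp=\PP(T_8'*\Delta_0)$ gives you a curve in $\hilb_{12}$ and hence a tangent vector $v\in T^1_{X_\bp}$; but containment of $v$ in one or another $Z_i$ does \emph{not} determine which components contain $[X]$. If $v$ lies in an intersection $Z_i\cap Z_j$ you cannot decide between them, and conversely even if $v$ lies only in $Z_{99}$, this does not exclude $[X]\in B_{97}$, since $[X]$ could reach $X_\bp$ along a different curve inside $B_{97}$. The paper avoids this by using Theorem~\ref{thm:uf} more strongly: because the components $B_{97}$ and $B_{99}$ are cut out near $[X_\bp]$ by the explicit determinantal equations, and because $X$ turns out to be a fiber of the universal family $\mathcal{U}$ at a \emph{polynomial} (not power-series) parameter value, one can evaluate those equations at the actual point $[X]$ and read off membership and smoothness directly. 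Membership in $B_{98}$ is then settled by a separate obstruction calculation at $[X]$ itself, using the observation that if $[X]\notin B_{98}$ then the tangent cone at $[X]$ is already generated by linear or quadratic forms, so the lowest-order obstructions suffice.

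Relatedly, your explanation of why $B_{98}$ fails to be smooth at such $[X]$ is not correct: the fact that $\TC_{[X_\bp]}\hilb_{12}$ has two $98$-dimensional pieces shows $B_{98}$ is singular at $[X_\bp]$, not at $[X]$. What the paper does is compute the decomposition of the lowest-order obstructions at $[X]$ and observe, in the cases where $[X]$ lies on all three smoothing components, that the predicted $B_{98}$-piece is not a single smooth $98$-dimensional component. Finally, for the asterisked cases the paper needs more than ``a deformation path'' or ``secondary degeneration'': the embedded components (e.g.\ for 147467, 446913, 544886, 524375, 321879, 547426) are excluded or handled by specific scroll arguments, including a lemma computing $\dim T_\eta\hilb_{12}$ at a general point of the locus of $(3D{-}3F)$-divisors on a $(2,2,1,0)$-scroll and, for the $(3,2,0,0)$ and $(4,1,0,0)$ cases, a direct argument that the required scroll deformation is incompatible with rolling the cubic three times. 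Your proposal does not supply these ingredients.
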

\begin{proof}
The theorem is proved using case by case computer computation.
We use {\small\verb+TOPCOM+} to partition $\tftw$ into three subsets:
\begin{enumerate}
	\item Those $X\in\tftw$ whose moment polytope has a regular unimodular triangulation of the form $T_8*\Delta_0$;
	\item Those $X\in\tftw$ whose moment polytope has a regular unimodular triangulation of the form $T_8'*\Delta_0$ but not of the form $T_8*\Delta_0$;
	\item Those $X\in\tftw$ whose moment polytope does not have a regular unimodular triangulation of the form $T_8*\Delta_0$ or $T_8'*\Delta_0$.
\end{enumerate}

If $X$ belongs to set (i), then $X$ is unobstructed and $[X]$ is a smooth point on $B_{98}$, the component corresponding to $V_{12}$. This covers all $X\in\tftw$ not listed in Table \ref{table:torictwelve}. 

The set (ii) is a subset of all those $X\in\tftw$ which are listed in Table \ref{table:torictwelve} as lying on $B_{97}$ or $B_{99}$. It excludes exactly those lying on $B_{96}$ or $B_{99}^\dagger$ and numbers 321879 and 524375.
Since such $X$ has a moment polytope with a regular unimodular triangulation of the form $T_8'*\Delta_0$, it follows from Theorem \ref{thm:tc} that the only possible components of $\hilb_{12}$ on which $[X]$ can lie are $B_{97}$, $B_{98}$, or $B_{99}$. Now, using the triangulation to $T_8'*\Delta_0$, we can explicitly find a curve in $\hilb_{12}$ passing through $[X]$ and $[X_{\bp}]$. Using the local universal family $\mathcal{U}$  over $B_{97}$ and $B_{99}$ from Theorem \ref{thm:uf}, it turns out that $X$ always appears as fiber of $\mathcal{U}$ defined by polynomials (instead of power series). This allows us to determine exactly which of the components $B_{97}$ and $B_{99}$ $[X]$ lies on, and using the local equations for these components, whether that component is smooth at $[X]$. Computations show that we in fact always have smoothness in these cases.

For such cases, it remains to be seen if $[X]$ also lies on $B_{98}$ (and whether that is a smooth point on the component). Since $[X]$ is a smooth point on $B_{97}$ and/or $B_{99}$, the components of the tangent cone corresponding to $B_{97}$ and/or $B_{99}$ must be cut by equations whose lowest order terms are linear. Suppose that $[X]$ does not lie on $B_{98}$. 
Then the ideal of the tangent cone will be generated by equations with lowest order term quadratic if $[X]$ lies on both $B_{97}$ and $B_{99}$, and by equations with lowest order term linear otherwise. Thus, in such cases, the tangent cone is cut out by the lowest order obstruction equations.

We may now proceed as follows. First, we use {\small\verb+Versal Deformations+} to compute the lowest order terms for the obstruction equations of $X$, and decompose the scheme $Z$ cut out by these equations into irreducible components. By the above argument, if this decomposition includes anything but smooth components of dimensions $97$ and $99$, $[X]$ must lie on $B_{98}$. The claim regarding the smoothness of $B_{98}$ follows in the appropriate cases from the fact that, in these cases, the additional components of $Z$ a posteriori consist of a single smooth $98$-dimensional component.

\begin{ex}Let $X$ be the toric variety number 5953. Then the ideal of $X$ is generated by the ten binomials 
	\begin{align*}
x_2x_6-y_0x_1,\qquad x_1x_3-y_0x_2,\qquad x_2x_4-y_0x_3,\\
x_3x_5-y_0x_4,\qquad x_4x_6-y_0x_5,\qquad x_1x_5-y_0x_6,\\
x_1x_4-y_0^2,\ \ \ \qquad x_2x_5-y_0^2,\ \ \ \qquad x_3x_6-y_0^2,\\
y_1y_2-y_0x_1.	
\end{align*}
Note that the lead monomials of these binomials are just the generators of the Stanley-Reisner ideal of $T_8'*\Delta$. Using the universal family from Theorem \ref{thm:uf}, the point $[X]$ locally has coordinates $t_{i,0}=-1$ for $i=1,\ldots,6$ and $c_1=-1$, with all other coordinates vanishing. The $3\times 6$ and $2\times 9$ matrices appearing in Theorem \ref{thm:tc} both have rank $1$ when evaluated at this point. 
Thus, $[X]$ is a smooth point on both $B_{97}$ and $B_{99}$.

The lowest order terms of the obstruction equations for $X$ have the form
$t_1t_3$, $t_1t_4$, $t_2t_5$, $t_2t_6$, which decomposes into the components $V(t_1,t_2)$ of dimension $98$, $V(t_1,t_5,t_6)$ and $V(t_2,t_3,t_4)$ of dimensions $98$, and $V(t_3,t_4,t_5,t_6)$ of dimension $97$. Thus, $[X]$ must lie on $B_{98}$ as well.
\end{ex}

The above two techniques deal with almost all $X\in\tftw$; for the set (iii) we are left with 10 exceptional cases which we deal with as described in \S\ref{sec:def2}. Once we have identified the number and dimension of components of the tangent cone at $[X]$ using obstruction calculus and one-parameter deformations, we still need to match these components to $B_{96}$, $B_{97}$, $B_{98}$, $B_{99}$, and $B_{99}^\dagger$, that is, step (v) from \S\ref{sec:def2}. For $B_{96}$ this is done by finding explicit degenerations of $V_{12,2,6}$  by using the rolling factors description in Proposition \ref{prop:caneq}. Likewise, for $B_{99}^\dagger$ we also use the rolling factors format to find explicit degenerations. For $B_{97}$, $B_{98}$, $B_{99}$, we find explicit degenerations to $\PP(T_8*\Delta_0)$ and/or $\PP(T_8'*\Delta_0)$.
The cases {\bf 146786}, {\bf 444999}, {\bf 544855}, and {\bf 544887} are dealt with in a straightforward manner. The remaining six cases are all more difficult, since their tangent cones appear to contain embedded components. 
In the remainder of this section, we study each of these cases individually by applying the general strategy outlined in \S\ref{sec:def2}.
\numex{Number 147467}{
Let $X$ be the toric variety number 147467. Then $X$ is a divisor of type $4D-3F$ on a scroll of type $(2,2,1,0)$. If the scroll is given by the maximal minors of 
$$
M=\left(\begin{array}{c c c c c}
x_0&x_1&y_0&y_1&z_0\\
x_1&x_2&y_1&y_2&z_1\\
\end{array}
\right)
$$
then $X$ is cut out by rolling the cubic $f_0=x_0z_0w-y_0^2y_1$ three times to $f_1,f_2,f_3$.

The dimension of $T_X^1$ is $22$, that of $H^0(\N_X)$ is $99$, and the dimension of $T_X^2$ equals $4$. The space $T_X^1$ can be decomposed into the direct sum of a $20$-dimensional space $T_{\mathrm{roll}}^1$ consisting of pure rolling factors perturbations of the cubics $f_i$, a one-dimensional space $T_{\mathrm{scroll}}^1$ generated by the perturbation 
$$
\left(\begin{array}{c c c c c}
x_0&x_1&y_0&y_1&z_0\\
x_1&x_2&y_1-t_1w&y_2&z_1\\
\end{array}
\right)
$$
of $M$, and a one-dimensional space generated by a certain non-scrollar perturbation
of the minors of $M$, keeping the $f_i$ constant.
A basis of $T_X^1$ may be chosen such that the lowest order terms
of the obstruction equations include $t_1^2t_3$, $t_1^2t_4$, and $t_1t_2$, where $t_1$ is as above, $t_2$ is a parameter for the non-scrollar perturbation, and $t_3,t_4$ are parameters for pure rolling factors deformations.\footnote{There is one additional element of $T^2$ which, at least up to order $8$ does not contribute an obstruction equation.}

The scheme $Z$ in the tangent space $\hilb_{10}$ cut out by these three monomials decomposes into components $Z_{98}=V(t_1)$ of dimension $98$, $V(t_2,t_3,t_4)$ of dimension $96$, and an embedded component $Z_{97}=V(t_2,t_1^2)$ of dimension $97$. The deformation in the $t_1$ direction described above deforms the scroll to one of type $(2,1,1,1)$, which gives a deformation to $V_{12,2,6}$. Thus, $Z_{96}$ is a component of the tangent cone corresponding to $B_{96}$. Similarly, a deformation in the $t_2$ direction takes us to a scheme $X'$ which degenerates to $\PP(T_8*\Delta_0)$, so  $Z_{98}$ corresponds to $B_{98}$. 

We need to check that the  component $Z_{97}$ does not correspond to $B_{97}$. Since $Z_{97}$ is embedded in $Z_{98}$, there is no obvious way to deform onto a $97$-dimensional component. In fact, by the discussion below, we will see that $Z_{97}$ cannot correspond to a non-embedded $97$-dimensional component of $\hilb_{12}$.
}

Let $S\subset\hilb_{12}$ be the closure of the set of all points corresponding to divisors of type $4D-3F$ on a scroll of type $(2,2,1,0)$. From the above example, it follows that $\dim S = 97$.  
\begin{lemma}\label{lemma:scroll}
	If $\eta$ is a general point of $S$, then $\dim T_{\eta}\hilb_{12}=99$.
\end{lemma}
\begin{proof}
	Let $Y$ correspond to $\eta$, i.e. $[Y]=\eta$.
	By the above example, $\dim T_{\eta}\hilb_{12}\leq 99$. Now, consider the subscheme $\mathcal{Y}$ of $\PP^{16}$ defined by the maximal minors of the matrix $M$ above, along with cubics $f_0,\ldots,f_3$ obtained by rolling factors, where 
	$$
f_0=z_0^3+s_1x_0^2+s_2x_0x_1+s_3x_0x_2+s_4x_0y_0+s_5x_0y_1+s_6y_0y_0+s_7y_0y_1+s_8y_0y_2.
	$$
	Here, $x_0,\ldots,w,s_1,\ldots,s_8$ are coordinates on $\PP^{16}$. Then $Y$ is codimension $8$ linear section of $\mathcal{Y}$. A computer calculation shows that $T_\mathcal{Y}^1$ is two-dimensional. Thus, there are two deformation directions in $T_Y^1$ which are not of pure rolling factor type. Hence, $\dim T_{\eta}\hilb_{12}\geq \dim S+2=99$. 	
\end{proof}

\begin{prop}Let $X$ be the toric Fano threefold 147467. The component $Z_{97}$ from the example above cannot correspond to a $97$-dimensional component of $\hilb_{12}$ which is not embedded.
\end{prop}
\begin{proof}
	Suppose $Z_{97}$ corresponds to a non-embedded 97-dimensional
	component $B\subset\hilb_{12}$. The scheme $Z_{98}$ is smooth, so it follows that $[X]$ is a smooth point on $B_{98}$. 
	Now, a general point $\eta\in S$ does not lie on $B_{96}$, and thus must lie on either $B$ or $B_{98}$ or both. But $\dim T_{\eta} \hilb_{12}=99$ by the above lemma, and  if $\eta\in B$, $\dim T_{\eta} B\leq 98$, and if $\eta\in B_{98}$, $\dim T_{\eta} B_{98}=98$. Thus, $\eta\in B\cap B_{98}$, so we have  $S\subset B\cap B_{98}$. But $B_\mathrm{red}=S$ for dimension reasons, which means $B$ is embedded in $B_{98}$, a contradiction.
\end{proof}

This concludes the discussion of the case 147467. The cases {\bf 446913} and {\bf 544886} are almost identical, also being divisors on a scroll of type $(2,2,1,0)$. For the remaining three cases (321879, 524375, and 547426) the lowest order terms of the obstruction equations give rise to embedded components of dimensions $97$ and less. We must show that these cases do not lie on $B_{96}$.

\numex{Number 524375}{
Let $X$ be the toric variety number 524375. Then $X$ is a divisor of type $4D-3F$ on a scroll $S$ of type $(3,2,0,0)$. If the scroll is given by the maximal minors of 
$$
M=\left(\begin{array}{c c c c c}
x_0&x_1&x_2&y_0&y_1\\
x_1&x_2&x_3&y_1&y_2\\
\end{array}
\right)
$$
then $X$ is cut out by rolling the cubic $f_0=x_0zw-y_0^3$ three times to $f_1,f_2,f_3$.

Now, suppose that $X$ has an embedded smoothing to $V_{12,2,6}$, which is a divisor on a scroll of type $(2,1,1,1)$. Then the deformation of $X$ corresponds to a deformation of the scroll $S$ to a scroll $S'$ of type $(2,1,1,1)$. Indeed the perturbations of the quadrics cutting out $S$ must cut out a scroll of type $(2,1,1,1)$. Now, a versal deformation of $S$ is given by linear perturbations of the $x_1$, $x_2$, and $y_1$ entries of the top row of $M$. In order to deform to a scroll of type $(2,1,1,1)$, either the $x_1$ or $x_2$ entry must be perturbed nontrivially. But this makes it impossible to roll $f_0$ (or a perturbation thereof) three times. Thus, $X$ does not deform to $V_{12,2,6}$ and $[X]$ does not lie on $B_{96}$.}

Similar arguments may be made for {\bf 321879} and {\bf 547426} which lie on scrolls of type $(3,2,0,0)$ and $(4,1,0,0)$, respectively. 
This completes the proof of the theorem.
\end{proof}

\begin{remark}
  Although we have only studied the Hilbert schemes of Fano threefolds of degree at most twelve, our techniques may be applied to study the case of higher degree Fanos as well. In particular, Theorem \ref{thm:nice} applies in the cases of degree $14$ and $16$, and a modified version of the theorem applies in the case of degree $18$, cf.~final remark of \cite[\S 3]{paperone}. 
  This already allows us to determine the location in the Hilbert scheme of a significant number of degree $14$, $16$, and $18$ Gorenstein toric Fano threefolds. For example, a computation shows that of the $207$ varieties in $\tft_{14}$, $43$ degenerate to $\PP(T_9*\Delta_0)$, and hence are smooth points on the Hilbert scheme component of degree $14$ rank one index one smooth Fano threefolds.

  The main obstruction to a complete result for higher degrees is the increased appearance of toric varieties with complicated obstruction equations. We often find ourselves in a situation similar to that of number 147467 (cf.~the proof of Theorem \ref{thm:re12}), where the lowest order terms of the obstruction equations cut out a non-reduced scheme. In order to appropriately adapt our general strategy of \S\ref{sec:def2}, we would need in each case some further structural result such as Lemma \ref{lemma:scroll}.
\end{remark}

\bibliographystyle{amsalpha}
\bibliography{deg12}

\end{document}